\def\draft{n}
\theoremstyle{plain}
\newtheorem{theorem}{Theorem}
\newtheorem{proposition}{Proposition}[section]
\newtheorem{lemma}[proposition]{Lemma}
\newtheorem{corollary}[proposition]{Corollary}
\newtheorem{conjecture}{Conjecture}
\theoremstyle{definition}
\newtheorem{definition}[proposition]{Definition}
\newtheorem{problem}[proposition]{Problem}
\theoremstyle{remark}
\newtheorem{example}[proposition]{Example}
\newtheorem{remark}[proposition]{Remark}
\def\printname#1{
        \if\draft y
                \smash{\makebox[0pt]{\hspace{-0.5in}
                        \raisebox{8pt}{\tt\tiny #1}}}
        \fi
}
\newcommand{\psdraw}[2]
         {\begin{array}{c} \hspace{-1.3mm}
        \raisebox{-4pt}{\epsfig{figure=draws/#1.eps,width=#2}}
        \hspace{-1.9mm}\end{array}}
\newlength{\standardunitlength}
\long\def\@makecaption#1#2{%
     \vskip 10pt

\setbox\@tempboxa\hbox{%\ifvoid\tinybox\else\box\tinybox\fi
       \small\sf{\bfcaptionfont #1. }\ignorespaces #2}%
     \ifdim \wd\@tempboxa >\captionwidth {%
         \rightskip=\@captionmargin\leftskip=\@captionmargin
         \unhbox\@tempboxa\par}%
       \else
         \hbox to\hsize{\hfil\box\@tempboxa\hfil}%
     \fi}
\font\bfcaptionfont=cmssbx10 scaled \magstephalf
\newdimen\@captionmargin\@captionmargin=2\parindent
\newdimen\captionwidth\captionwidth=\hsize
\def\lbl#1{\label{#1}\printname{#1}}
\def\BN{\mathbb N}
\def\BZ{\mathbb Z}
\def\BQ{\mathbb Q}
\def\BR{\mathbb R}
\def\BC{\mathbb C}
\def\D{\Delta}
\def\calS{\mathcal S}
\def\a{\alpha}
\def\l{\lambda}
\def\Ga{\Gamma}
\def\S{\Sigma}
\def\ga{\gamma}
\def\la{\langle}
\def\ra{\rangle}
\def\e{\epsilon}
\def\Ga{\Gamma}
\def\b{\beta}
\def\longto{\longrightarrow}
\def\pt{\partial}
\def\Log{\mathrm{Log}}
\def\calA{\mathcal{A}}
\def\calS{\mathcal{S}}
\def\CV{\mathcal{CV}}
\def\ft{\mathfrak{t}}
\def\Tr{\mathrm{Tr}}
\def\gterm{balanced term}
\def\Gterm{Balanced term}
\def\sterm{special term}
\def\hatFT{\widehat{\mathrm{4T}}}
\def\wb2C{\widehat{\b_2(\BC)}}
\def\calC{\mathcal{C}}
\def\CV{\mathrm{CV}}
\begin{document}

%%%%%%%%%%%%%%%%%%%%%%{page1}

\title[An ansatz for the asymptotics of hypergeometric multisums]{
An ansatz for the asymptotics of hypergeometric multisums}
%%Hypergeometric multisums, singularities and additive $K$-theory
\author{Stavros Garoufalidis}
\address{School of Mathematics \\
         Georgia Institute of Technology \\
         Atlanta, GA 30332-0160, USA \\ 
         {\tt http://www.math.gatech} \newline {\tt .edu/$\sim$stavros } }
\email{stavros@math.gatech.edu}

\thanks{The author was supported in part by NSF. \\
\newline
1991 {\em Mathematics Classification.} Primary 57N10. Secondary 57M25.
\newline
{\em Key words and phrases: holonomic functions, regular holonomic functions,
special hypergeometric terms, WZ algorithm, diagonals of rational functions,
asymptotic expansions, transseries, Laplace transform, Hadamard product,
Bethe ansatz, singularities, additive $K$-theory, regulators,
Stirling formula, algebraic combinatorics, polytopes, Newton polytopes,
$G$-functions.
}
}

\date{February 14, 2008 }

%\dedicatory{\large{\bf Preliminary notes. Please do not 
%distribute under any circumstances!}}

\begin{abstract}
Sequences that are defined by multisums of hypergeometric terms 
with compact support occur frequently in enumeration problems of 
combinatorics, algebraic geometry and perturbative quantum field theory. 
The standard recipe to study the asymptotic expansion of such sequences
is to find a recurrence satisfied by them, convert it into a differential
equation satisfied by their generating series, and analyze the singulatiries
in the complex plane. We propose a shortcut by constructing directly 
from the structure of the hypergeometric term a finite set, for which
we conjecture (and in some cases prove)
that it contains all the singularities of the generating series.
Our construction of this finite set is given by the solution set
of a balanced system of polynomial equations of a rather special form, 
reminiscent of the Bethe ansatz. The finite set can also be identified
with the set of critical values of a potential function, as well as
with the evaluation of elements of an additive $K$-theory group by a 
regulator function. We give a proof of our conjecture in some special cases, 
and we illustrate our results with numerous examples.
\end{abstract}

\maketitle

\tableofcontents

\section{Introduction}
\lbl{sec.intro}

\subsection{The problem}
\lbl{sub.problem}

The problem considered here is the following: given a balanced
hypergeometric term
$\ft_{n,k_1,\dots,k_r}$ with compact support for each $n \in \BN$, we wish
to find an asymptotic expansion of the sequence 

\begin{equation}
\lbl{eq.aftn}
a_{\ft,n}=\sum_{(k_1,\dots,k_r) \in \BZ^r} \ft_{n,k_1,\dots,k_r}.
\end{equation}
Such sequences occur frequently
in enumeration problems of combinatorics, algebraic geometry and perturbative
quantum field theory; see \cite{St,FS,KM}.
The standard recipe for this is to find a recurrence satisfied by $(a_n)$, 
convert it into a differential equation satisfied by the generating series
$$
G_{\ft}(z)=\sum_{n=0}^\infty a_{\ft,n} z^n
$$ 
and analyze the singulatiries of its analytic continuation 
in the complex plane. We propose a shortcut by constructing directly 
from the structure of the hypergeometric term $\ft_{n,k_1,\dots,k_r}$ a finite set
$S_{\ft}$, for which we conjecture (and in some cases prove)
that it contains all the singularities of $G(z)$.
Our construction of $S_{\ft}$ is given by the solution set
of a balanced system of polynomial equations of a rather special form, 
reminiscent of the Bethe ansatz. $S_{\ft}$ can also be identified
with the set of critical values of a potential function, as well as
with the evaluation of elements of an additive $K$-theory group by a 
regulator function. We give a proof of our conjecture in some special cases, 
and we illustrate our results with numerous examples.

\subsection{Existence of asymptotic expansions}
\lbl{sub.motivation}

A general existence theorem for asymptotic expansions of sequences discussed
above was recently given in \cite{Ga4}. To phrase it, we need to recall 
what is a $G$-function in the sense of Siegel; \cite{Si}.

\begin{definition}
\lbl{def.Gfunction}
We say that series $G(z)=\sum_{n=0}^\infty a_n z^n$ is a {\em $G$-function}
if 
\begin{itemize}
\item[(a)]
the coefficients $a_n$ are algebraic numbers, and 
\item[(b)]
there exists a constant $C>0$ so that for every $n \in \BN$
the absolute value of every conjugate of $a_n$ is less than or equal to 
$C^n$, and
\item[(c)]
the common denominator of $a_0,\dots, a_n$ is less than or equal 
to $C^n$.
\item[(d)]
$G(z)$ is holonomic, i.e., it satisfies a linear differential equation
with coefficients polynomials in $z$.
\end{itemize}
\end{definition}

The main result of \cite{Ga4} is the following theorem.

\begin{theorem}
\lbl{thm.G}\cite[Thm.3]{Ga4}
For every  \gterm\ $\ft$, the generating series $G_{\ft}(z)$
is a $G$-function. 
\end{theorem}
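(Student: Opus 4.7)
My plan is to verify the four conditions (a)--(d) of Definition \ref{def.Gfunction} for $G_\ft(z)$ one at a time, exploiting the multiplicative structure of factorials in $\ft_{n,k_1,\dots,k_r}$. Conditions (a) and (d) are the easiest. Since $\ft$ is a ratio of Gamma--factors evaluated at integer (or at worst rational) arguments, each $a_{\ft,n}$ lies in $\BQ$, which handles (a). For (d) I would invoke multivariate Wilf--Zeilberger creative telescoping: iteratively applying Zeilberger's algorithm to the proper hypergeometric summand, eliminating each $k_i$ in turn and using compact support so that all boundary telescoping terms vanish, produces a linear recurrence with polynomial coefficients in $n$ for $a_{\ft,n}$. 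Converting this recurrence directly gives a linear ODE with polynomial coefficients in $z$ for $G_\ft(z)$.

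For condition (b) I would use that the support of $\ft_{n,\cdot}$ lies in a box of side length $O(n)$, so the multisum has only polynomially many terms. Each summand is a product $\prod \Gamma(\alpha_i n+\beta_i\cdot k+\gamma_i)^{\pm 1}$, and Stirling's formula, together with the balanced condition (which forces the leading $n^n$ contributions in numerator and denominator to cancel, leaving only a factor of exponential type in $n$ and $k$), gives the uniform bound $|\ft_{n,k}|\le C^n$ for $k$ in the support. Summing over polynomially many lattice points yields $|a_{\ft,n}|\le C^n$; since $a_{\ft,n}$ is rational, its conjugates coincide with itself, and (b) follows.

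The hard part I expect is condition (c), the common--denominator bound. The denominator of each $\ft_{m,k}$ with $m\le n$ divides a fixed finite product of factorials of integers of the form $\alpha_i m+\beta_i\cdot k+\gamma_i$, with the $\alpha_i,\beta_i$ uniformly bounded; hence the common denominator $d_n$ of $a_{\ft,0},\dots,a_{\ft,n}$ divides a finite product of quantities of the form $\mathrm{lcm}(1,\dots,\lfloor A n\rfloor)$, and Chebyshev's estimate $\log\mathrm{lcm}(1,\dots,N)=O(N)$ gives $d_n\le C^n$. A cleaner alternative I would fall back on, if this prime--by--prime bookkeeping becomes delicate, is to represent $a_{\ft,n}$ as the diagonal coefficient of a rational function in $r+1$ variables via contour integral representations of factorials, and then invoke Christol's theorem that diagonals of rational functions with rational coefficients are $G$-functions, packaging (a)--(d) uniformly. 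In either route, the place where the balancedness hypothesis is genuinely needed is the uniform exponential bound $|\ft_{n,k}|\le C^n$ of the second paragraph: without it, Stirling can produce terms of size $n^{cn}$ that polynomial summation over the support cannot tame, and (b) would fail outright.
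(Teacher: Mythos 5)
The paper does not prove this theorem --- it is cited from \cite{Ga4} --- so there is no in-paper argument to compare against; but your proposal has a genuine gap in exactly the place you flag as hard. You write that the denominator of $\ft_{m,k}$ divides a product of factorials $A_j(m,k)!$ and \emph{hence} the common denominator of $a_{\ft,0},\dots,a_{\ft,n}$ divides a product of $\mathrm{lcm}(1,\dots,\lfloor An\rfloor)$'s. That inference is false as stated: the denominator of $1/N!$ in lowest terms is $N!$ itself, of size $e^{N\log N}$, whereas $\mathrm{lcm}(1,\dots,N)$ is only of size $e^{O(N)}$, so the passage from factorials to lcm's needs an argument and a Chebyshev bound alone cannot supply it. The missing ingredient is precisely the balance condition $\sum_j\e_j A_j=0$, applied $p$-adically through Legendre's formula: writing $v_p(\ft_{n,k})=\sum_{i\geq 1}\sum_j\e_j\lfloor A_j(n,k)/p^i\rfloor$ and using $\sum_j\e_j A_j=0$, each inner sum over $j$ equals $-\sum_j\e_j\{A_j(n,k)/p^i\}$, which is bounded in absolute value by $J$, and only $O(\log_p n)$ values of $i$ contribute. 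This gives $|v_p(\ft_{n,k})|\le J\lceil\log_p(An)\rceil$ uniformly in $p$ and in $k$ in the support, hence $v_p(a_{\ft,m})\geq -J\lceil\log_p(An)\rceil$ for $m\le n$, and summing $\log p$ over $p\le An$ via the prime number theorem gives the required $C^n$. In particular your closing claim that balancedness ``is genuinely needed'' only for the size bound (b) is wrong; without it (c) fails just as badly, through factorial denominators.

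Two smaller points. First, the $C_i$ in Definition \ref{def.hyperg} are algebraic, not rational, so $a_{\ft,n}\in\bar\BQ$ rather than $\BQ$; the sentence ``since $a_{\ft,n}$ is rational, its conjugates coincide with itself'' is incorrect. The fix is to observe that a Galois conjugate of $a_{\ft,n}$ is again the multisum of the balanced term obtained by conjugating the $C_i$, and the same Stirling estimate applies uniformly to each conjugate --- but this has to be said. Second, the fallback through diagonals of rational functions (Cauchy integral representation of factorials together with a Christol--Andr\'e type statement that diagonals of rational functions over number fields are $G$-functions) is a legitimate and in some ways cleaner packaging of (a)--(c); as written, though, it is only a pointer, and the step that a balanced multisum's generating function is \emph{literally} a diagonal of a rational function, rather than some more general period integral, itself deserves a proof.
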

Using the fact that the local monodromy of a $G$-function around a singular
point is {\em quasi-unipotent} (see \cite{Ka,An2,CC}), an elementary 
application of Cauchy's theorem
implies the following corollary; see \cite{Ga4}, \cite{Ju} and 
\cite[Sec.7]{CG1}.

\begin{corollary}
\lbl{cor.assexp}
If $G(z)=\sum_{n=0}^\infty a_n z^n$ is a $G$-function, then $(a_n)$ has a 
{\em transseries expansion}, that is an expansion of the form
\begin{equation}
\lbl{eq.trans}
a_n \sim \sum_{\l \in \S} \l^{-n} n^{\a_{\l}} (\log n)^{\b_{\l}} \sum_{s=0}^\infty 
\frac{c_{\l,s}}{n^s}
\end{equation}
where $\S$ is the set of singularities of $G(z)$, $\a_{\l} \in \BQ$,
$\b_{\l} \in \BN$, and $c_{\l,s} \in \BC$. In addition, $\S$ is a finite
set of algebraic numbers, and generates a number field $E=\BQ(\S)$.
\end{corollary}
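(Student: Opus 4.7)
The plan is to combine the local Fuchsian structure of $G(z)$ at each of its finitely many singularities with a Cauchy-type contour integral representation of $a_n$, and then apply a standard singularity-analysis transfer.

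First, since $G(z)$ is holonomic it satisfies a linear ODE $L\cdot G = 0$ with polynomial coefficients, and every singularity of the analytic continuation of $G$ lies in the finite zero set $\S$ of the leading coefficient of $L$. In particular $\S$ consists of algebraic numbers and generates a number field $E = \BQ(\S)$. By the theorem that the local monodromy of a $G$-function at each $\l \in \S$ is quasi-unipotent (\cite{Ka,An2,CC}), each $\l$ is a regular singular point of $L$, and classical Fuchs theory gives a local representation
$$
G(z) = \sum_j (1 - z/\l)^{\a_j(\l)} \bigl(\log(1 - z/\l)\bigr)^{\b_j(\l)} h_{j,\l}(z)
$$
on a slit neighborhood of $\l$, with $\a_j(\l) \in \BQ$, $\b_j(\l) \in \BN$, and each $h_{j,\l}$ holomorphic at $\l$.

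Next, write $a_n = \frac{1}{2\pi i}\oint_\ga G(z)\, z^{-n-1}\, dz$ for a small positively oriented circle $\ga$ around the origin. Deform $\ga$ outward to a contour of large radius avoiding $\S$ by small Hankel-type keyhole detours around each $\l$. Each Hankel contour contributes, term by term from the local expansion,
$$
\frac{1}{2\pi i}\int_{\mathrm{Hankel}_\l} (1 - z/\l)^{\a_j(\l)} \bigl(\log(1 - z/\l)\bigr)^{\b_j(\l)} h_{j,\l}(z)\, z^{-n-1}\, dz,
$$
and the standard singularity-analysis transfer (see \cite{FS}, \cite{Ju}) evaluates this asymptotically as $\l^{-n} n^{\a_\l}(\log n)^{\b_\l}\sum_{s \ge 0} c_{\l,s}\, n^{-s}$, by Taylor-expanding $h_{j,\l}$ in $(z-\l)$ and integrating termwise (the standard Hankel formula $\frac{1}{2\pi i}\int e^t t^{-\nu}\, dt = 1/\Gamma(\nu)$ converts the local exponents to the claimed $\a_\l,\b_\l$, with the overall shift $\a_\l = -\a_j(\l) - 1$). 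Summing over $\l \in \S$ yields \eqref{eq.trans}.

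The main obstacle is controlling the remainder uniformly in $n$: one must show that the contribution of the outer arcs of the deformed contour, together with the truncation of each local expansion beyond order $N$, is $O(n^{-N})$ for every $N$. This is handled by the fact that a $G$-function, being holonomic with regular singularities at every point of $\BP^1$ (including $\infty$), has at worst polynomial growth on sectors that stay a definite distance from $\S$; letting the outer radius tend to infinity (or grow mildly with $N$) and tracking this moderate growth against the factor $z^{-n-1}$ gives the required uniform bound. Combined with the finiteness of $\S$ and the rationality of the $\a_j(\l)$, this produces the Poincaré transseries \eqref{eq.trans} and completes the proof.
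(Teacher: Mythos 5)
Your proof is correct in outline and follows exactly the route the paper points to (Cauchy's theorem, contour deformation with Hankel detours around the finitely many algebraic singularities, local Fuchsian expansions, and the standard singularity-analysis transfer in the style of Jungen and Flajolet--Sedgewick). One logical slip worth repairing: quasi-unipotence of the local monodromy does \emph{not} by itself imply that $\l$ is a regular singular point --- $e^{1/z}$ has trivial monodromy at $0$ yet an irregular singularity there --- so you should invoke separately the theorem (Chudnovsky, Katz, Andr\'e, the very references you cite) that the minimal operator annihilating a $G$-function is globally nilpotent, hence Fuchsian on all of $\BP^1$ with rational exponents; with that stated explicitly, the argument is complete. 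Also note that what you denote $\S$ at the start (the zero set of the leading coefficient of $L$) is a priori a finite superset of the actual singularity set $\S$ of $G$, which is all that is needed.
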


\subsection{Computation of asymptotic expansions}
\lbl{sub.compexp}

Theorem \ref{thm.G} and its Corollary \ref{cor.assexp} are not constructive.
The usual way for computing the asymptotic expansion for sequences 
$(a_{\ft,n})$
of the form \eqref{eq.aftn} is to find a linear reccurence, and convert
it into a differential equation for the generating series $G_{\ft}(z)$.
The singularities of $G_{\ft}(z)$ are easily located from the roots of 
coefficient of the leading derivative of the ODE. 
This approach is taken by Wimp-Zeilberger, following Birkhoff-Trjitzinsky;
see \cite{BT,WZ2} and also \cite{Ni}. A reccurence for a multisum sequence
$(a_{\ft,n})$ follows from Wilf-Zeilberger's constructive theorem, and 
its computer implementation; see \cite{Z,WZ1,PWZ,PR1,PR2}. Although 
constructive, these algorithms are impractical for multisums with, say, 
more than three summation variables. 

On the other hand, it seems wasteful to compute an ODE for $G_{\ft}(z)$,
and then discard all but a small part of it in order to determine the
singularities $\S_{\ft}$ of $G_{\ft}(z)$.

The main result of the paper is a construction of a finite set $S_{\ft}$
of algebraic numbers directly from the summand $\ft_{n,k_1,\dots,k_r}$, which 
we conjecture that it includes the set $\S_{\ft}$. We give a proof of our
conjecture in some special cases, as well as supporting examples.

Our definition of the set $\S_{\ft}$ is reminiscent of the Bethe ansatz, 
and is related to critical values of potential functions and
additive $K$-theory.

Before we formulate our conjecture let us give an 
instructive example.

\begin{example}
\lbl{ex.0}
Consider the {\em Apery sequence} $(a_n)$ defined by
\begin{equation}
\lbl{eq.aperya}
a_n=\sum_{k=0}^n \binom{n}{k}^2 \binom{n+k}{k}^2.
\end{equation}
It turns out that $(a_n)$ 
satisfies a linear recursion relation with coefficients in $\BQ[n]$
(see \cite[p.174]{WZ2})

\begin{equation}
\lbl{eq.apery2}
(n+2)^3 a_{n+2} -%(34 n^3+153 n^2+231 n+117)
(2n+3)(17n^2+51n+39) a_{n+1}+(n+1)^3 a_n=0
\end{equation}
for all $n \in \BN$, with initial conditions $a_0=1, a_1=5$.
It follows that $G(z)$ is {\em holonomic}; i.e., it satisfies a linear ODE
with coefficients in $\BQ[z]$:

\begin{equation}
\lbl{eq.apery3}
z^2(z^2-34z+1) G'''(z)+3z(2z^2-51z+1)G''(z)+(7z^2-112z+1)G'(z)
+(z-5) G(z)=0
\end{equation}
with initial conditions $G(0)=1$, $G'(0)=5$, $G''(0)=146$.

This implies that the possible singularities of $G(z)$ are the roots of the
equation:

\begin{equation}
\lbl{eq.aperyE}
z^2(z^2-34 z +1)=0.
\end{equation}
Thus, $G(z)$ has analytic continuation as a multivalued 
function in $\BC\setminus\{0,17+12\sqrt{2},17-12\sqrt{2}\}$.
Since the Taylor series coefficients of $G(z)$ at $z=0$ are positive integers,
and $G(z)$ is analytic at $z=0$, it follows that $G(z)$ has a singularity
inside the punctured unit disk. Thus, $G(z)$ is singular at $17-12\sqrt{2}$.
By Galois invariance, it is also singular at $17+12\sqrt{2}$.
The proof of Corollary \ref{cor.assexp} implies that $(a_n)$ 
has an asymptotic expansion of the form:
\begin{equation}
\lbl{eq.aperyass}
a_n \sim (17+12 \sqrt{2})^n n^{-3/2} \sum_{s=0}^\infty \frac{c_{1s}}{n^s}
+ (17-12 \sqrt{2})^n n^{-3/2} \sum_{s=0}^\infty \frac{c_{2s}}{n^s}
\end{equation}
for some constants $c_{1s}, c_{2s} \in \BC$ with $c_{10} c_{20} \neq 0$. 
A final calculation shows that 
\begin{equation*}
c_{10} = \frac{1}{\pi^{3/2}} \frac{3+2 \sqrt{2}}{4 \sqrt[4]{2}},
\qquad c_{20} = \frac{1}{\pi^{3/2}} \frac{3-2 \sqrt{2}}{4 \sqrt[4]{2}}.
\end{equation*}
\end{example}
%%%%%%%% See Apery.Stokes.nb 
Our paper gives an ansatz that quickly produces the numbers $17\pm 2 \sqrt{2}$
given the expression \eqref{eq.aperya} of $(a_n)$, bypassing Equations
\eqref{eq.apery2} and \eqref{eq.apery3}. This is explained in Section
\ref{sub.apery}. In a forthcoming publication we will explain how to
compute the {\em Stokes constants} $c_{s0}$ for $s=0$, $1$ in terms of the 
expression \eqref{eq.aperya}.

\subsection{Hypergeometric terms}
\lbl{sub.hyper}

We have already mentioned multisums of balanced hypergeometric terms.
Let us define what those are.

\begin{definition}
\lbl{def.hyperg}
An $r$-dimensional 
{\em balanced hypergeometric term} $\ft_{n,k}$ (in short, {\em \gterm },
also denoted by $\ft$) in variables $(n,k)$, where $n \in \BN$ and 
$k=(k_1,\dots,k_r) \in \BZ^r$, is an expression of the form:
\begin{equation}
\lbl{eq.defterm}
\ft_{n,k}=C_0^n \prod_{i=1}^r C_i^{r_i} \prod_{j=1}^J A_j(n,k)!^{\e_j}
\end{equation}
where $C_i$ are algebraic numbers for $i=0,\dots,r$, 
$\e_j=\pm 1$ for $j=1,\dots,J$, and $A_j$ are integral 
linear forms in $(n,k)$ that satisfy the {\em balance condition}:
\begin{equation}
\lbl{eq.Ajsum}
\sum_{j=1}^J \e_j A_j=0.
\end{equation}
\end{definition}

\begin{remark}
\lbl{rem.encode}
An alternative way of encoding a \gterm\ is to record the vector 
$(C_0,\dots,C_r)$, and the $J \times (r+2)$ matrix of the coefficients
of the linear forms $A_j(n,k)$, and the signs $\e_j$ for $j=1,\dots,r$.
\end{remark}

\subsection{\Gterm s, generating series and singularities}
\lbl{sub.gseries}

Given a \gterm\ $\ft$, we will assign a sequence $(a_{\ft,n})$
and a corresponding generating series $G_{\ft}(z)$
to a \gterm\ $\ft$, and will study the set $\S_{\ft}$ of singularities of the
analytic continuation of $G_{\ft}(z)$ in the complex plane. 

Let us introduce some useful notation. 
Given a linear form $A(n,k)$ in variables $(n,k)$ where $k=(k_1,\dots,k_r)$,
and $i=0,\dots,r$, let us define

\begin{equation}
\lbl{eq.vi}
v_i(A)=a_i, \qquad v_0(A)=a_0, \qquad 
\text{where} \qquad A(n,k)=a_0 n+\sum_{i=1}^r a_i k_i.
\end{equation}
For $w=(w_1,\dots,w_r)$ we define:

\begin{equation}
\lbl{eq.Aw}
A(w)=a_0+\sum_{i=1}^r a_i w_i.
\end{equation}

\begin{definition}
\lbl{def.polytope}
Given a \gterm\ $\ft$ as in \eqref{eq.defterm}, define its
{\em Newton polytope} $P_{\ft}$ by:
\begin{equation}
\lbl{eq.Pft}
P_{\ft}=\{ w \in \BR^r \,|  \, A_j(w) \geq 0 \,\text{for}\, 
j=1,\dots, J\}
\subset \BR^r .
\end{equation}
\end{definition} 
We will assume that $P_{\ft}$ is a compact
rational convex polytope in $\BR^r$ with non-empty interior.
It follows that for every $n \in \BN$ we have:
\begin{equation}
\lbl{eq.polytope}
\mathrm{support}(\ft_{n,k})= n P_{\ft} \cap \BZ^{r}.
\end{equation}

\begin{definition}
\lbl{def.aG}
Given a \gterm\ $\ft$ consider the sequence:
\begin{equation}
\lbl{eq.an}
a_{\ft,n}=\sum_{k \in n P_{\ft} \cap \BZ^r} \ft_{n,k}
\end{equation}
(the sum is finite for every $n \in \BN$)
and the corresponding generating function:
\begin{equation}
\lbl{eq.Gz}
G_{\ft}(z)=\sum_{n=0}^\infty a_{\ft,n} z^n \in \bar\BQ[[z]].
\end{equation}
Here $\bar\BQ$ denote the field of algebraic numbers.
Let $\S_{\ft}$ denote the finite set of singularities of $G_{\ft}$,
and $E_{\ft}=\BQ(\S_{\ft})$ denote the corresponding number field,
following Corollary \ref{cor.assexp}. 
\end{definition}

\begin{remark}
\lbl{rem.tG}
Notice that $\ft$ determines $G_{\ft}$ but {\em not} vice-versa.
Indeed, there are nontrivial identities among multisums of \gterm s.
Knowing a complete set of such identities would be very useful in
constructing invariants of knotted objects, as well as in
understanding relations among periods; see \cite{KZ}.
\end{remark}

\begin{remark}
\lbl{rem.balance}
The balance condition of Equation \eqref{eq.Ajsum} is imposed so that
for every \gterm\ $\ft$ the corresponding sequence $(a_{\ft,n})$ grows at most 
exponentially. This follows from Stirling's formula (see Corollary 
\ref{cor.app}) and it implies that the power series $G_{\ft}(z)$ is 
the germ of an analytic function at $z=0$.
Given a proper hypergeometric term $\ft_{n,k}$ in the sense of \cite{WZ1},
we can find $\a \in \BN$ and $\e=\pm 1$
so that $\ft_{n,k} (\a n)!^{\e}$ is a \gterm .
\end{remark}

\subsection{The definition of %$X_{\ft}$ and 
$S_{\ft}$ and $K_{\ft}$}
\lbl{sub.defhat}

Let us observe that if $\ft$ is a \gterm\ and $\D$ is a face of its
Newton polytope $P_{\ft}$, then $\ft|_{\D}$ is also a \gterm . 

\begin{definition}
\lbl{def.vareq}
%\rm{(a)}
Given a \gterm\ $\ft$ as in Equation \eqref{eq.defterm}
consider the following system of {\em Variational Equations}:
\begin{equation}
\lbl{eq.vara}
C_i \prod_{j=1}^J A_j(w)^{\e_j  v_i(A_j)}
=1 \qquad \text{for} \,\, i=1,\dots, r 
\end{equation}
in the variables $w=(w_1,\dots,w_r)$.
%\newline
%\rm{(b)} 
Let $X_{\ft}$ denote the set of complex solutions of \eqref{eq.vara},
with the convention that when $r=0$ we set $X_{\ft}=\{0\}$, and define
%\newline
%\rm{(c)} Let
\begin{eqnarray}
\lbl{eq.StD}
\CV_{\ft} &=& \{ C_0^{-1} \prod_{j: A_j(w) \neq 0} A_j(w)^{-\e_j v_0(A_j)} \,\, | \,\,
w \in X_{\ft} \} %\cap \BC 
\\
\lbl{eq.Shat}
S_{\ft}&=&\{0\} \cup \cup_{\D \,\mathrm{face}\,\,\mathrm{of} \, P_{\ft}} \CV_{\ft|_\D} \\
\lbl{eq.Khat}
K_{\ft}&=&\BQ(S_{\ft}).
\end{eqnarray}
\end{definition}

\begin{remark}
\lbl{rem.CVset}
There are two different incarnations of the set $\CV_{\ft}$: it coincides
with 
\begin{itemize}
\item[(a)]
the set of critical values of a potential function; see Theorem
\ref{thm.critV}. 
\item[(b)]
the evaluation of elements of an additive $K$-theory group under the
entropy regulator function; see Theorem \ref{thm.K}.
\end{itemize}
\end{remark}

It is unknown to the author whether $X_{\ft}$ is always a finite set. 
%; see for example Section \ref{sub.ex3}. 
Nevertheless, $S_{\ft}$ is always a finite subset of $\bar\BQ$; see 
Theorem \ref{thm.critV}. 
Equations \eqref{eq.vara} are reminiscent of the {\em Bethe ansatz}.

\subsection{The conjecture}
\lbl{sub.statement}
Section \ref{sub.gseries} constructs a map:
$$
\text{\Gterm s $\ft$} \longto (E_{\ft},\S_{\ft})
$$
via generating series $G_{\ft}(z)$ and their singularities, where $E_{\ft}$
is a number field and $\S_{\ft}$ is a finite subset of $E_{\ft}$.
Section \ref{sub.defhat} constructs a map:
$$
\text{\Gterm s $\ft$} \longto (K_{\ft},S_{\ft})
$$
via solutions of polynomial equations.
We are now ready to formulate our main conjecture.

\begin{conjecture}
\lbl{conj.1}
For every \gterm\ $\ft$ we have: $\S_{\ft} \subset S_{\ft}$
and consequently, $E_{\ft} \subset K_{\ft}$.
\end{conjecture}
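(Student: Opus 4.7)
The plan is to realise $G_{\ft}(z)$ as a multidimensional Laplace/period integral whose critical values coincide with $S_{\ft}$, and then appeal to the general principle that singularities of such period integrals lie among their critical values.

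First, I would apply Stirling's formula to the summand along rays $k=nw$ with $w\in\mathrm{int}(P_{\ft})$. The balance condition $\sum_{j}\e_j A_j\equiv 0$ on $\BR^{r+1}$ forces cancellation of the $n\log n$ and $-n$ contributions from $\sum_j\e_j\log((nA_j(w))!)$, leaving
\begin{equation*}
\ft_{n,nw}\;\sim\;h(w)\,n^{(\sum_j\e_j)/2}\,f(w)^n,\qquad f(w)=C_0\prod_{i=1}^r C_i^{w_i}\prod_{j=1}^J A_j(w)^{\e_j A_j(w)},
\end{equation*}
uniformly on compacta of $\mathrm{int}(P_{\ft})$, with an explicit algebraic prefactor $h(w)$. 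Converting the finite sum \eqref{eq.an} into an integral then gives $a_{\ft,n}\approx n^{r+(\sum_j\e_j)/2}\int_{P_{\ft}}f(w)^n h(w)\,dw$, up to contributions from a neighbourhood of $\partial P_{\ft}$.

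Second, I would identify $\CV_{\ft}$ with the set of reciprocal critical values of $f$. A direct differentiation yields
\begin{equation*}
\frac{\partial\log f}{\partial w_i}\;=\;\log C_i+\sum_{j}\e_j v_i(A_j)\bigl(\log A_j(w)+1\bigr),
\end{equation*}
and because the balance condition forces $\sum_j\e_j v_i(A_j)=0$ for every $i=0,\ldots,r$ (as coefficients of $n$ and of $k_i$ in the identically zero form $\sum_j\e_j A_j$), the constant ``$+1$'' drops out. Setting the gradient to zero and exponentiating reproduces the Bethe-ansatz system \eqref{eq.vara} exactly, so the interior critical locus of $f$ on $P_{\ft}$ is precisely $X_{\ft}$. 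A parallel computation, using the balance condition in the coefficient of $n$, gives $f(w^*)=C_0\prod_j A_j(w^*)^{\e_j v_0(A_j)}$ at every critical point $w^*$, so that $\{1/f(w^*):w^*\in X_{\ft}\}$ coincides with $\CV_{\ft}$.

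Third, I would extract the singularities of $G_{\ft}(z)$. Standard Laplace asymptotics imply that the radius of convergence of $G_{\ft}(z)$ is the minimum of $|1/f(w^*)|$ over critical points of $f$ on $\overline{P_{\ft}}$. Critical points lying on the relative interior of a proper face $\Delta$ are treated by observing that $\ft|_{\Delta}$ is itself a \gterm, so an induction on $\dim P_{\ft}$ reduces them to interior critical points of lower-dimensional terms; this is exactly why $S_{\ft}$ in \eqref{eq.Shat} is a union over all faces. To capture singularities on higher sheets of the analytic continuation, I would write $a_{\ft,n}$ as a diagonal coefficient of a rational function in $r+1$ variables and realise $G_{\ft}(z)$ as a residue/period integral whose Picard--Lefschetz monodromy is concentrated on the critical values of the phase $\log f(w)-\log z$, again giving elements of $S_{\ft}$.

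The principal obstacle is making this last step airtight. It is not presently known whether $X_{\ft}$ is always finite, so in general one must rule out positive-dimensional critical loci contributing extra Landau singularities; one must also check that no ``hidden'' singularity arises from tangencies of the integration contour with coordinate hyperplanes on higher sheets, and that the inductive passage to faces correctly tracks the degenerations of the Newton polytope so that no critical value is missed. These difficulties are the reason the paper establishes Conjecture~\ref{conj.1} only in special cases.
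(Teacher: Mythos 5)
The statement you were given is labeled a conjecture in the paper and is \emph{not} proved there in general; the paper establishes it only for $0$-dimensional terms (Theorem \ref{thm.kempty}, via classical hypergeometric singularity theory) and for positive \sterm s under an irreducibility hypothesis on $\S_{\ft}\setminus\{0\}$ (Theorem \ref{thm.1}, via a real Laplace estimate combined with Pringsheim's theorem and Galois transitivity). Your first two steps are correct and reproduce the paper's supporting results essentially verbatim: the gradient computation for $\log f$ with the ``$+1$'' cancelling because the balance condition forces $\sum_j\e_j v_i(A_j)=0$ is exactly the proof of Theorem \ref{thm.critV}(a); your evaluation $f(w^*)=C_0\prod_j A_j(w^*)^{\e_j v_0(A_j)}$ at critical points is Theorem \ref{thm.critV}(b); and the exponential rate $f(w)^n$ with algebraic prefactor and $n$-power $(\sum_j\e_j)/2$ is Corollary \ref{cor.app} together with Lemma \ref{lem.stirling}.

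The gap is in your third step. You appeal to a ``general principle that singularities of such period integrals lie among their critical values,'' but that principle, at the level of generality needed here, is precisely the open content of the conjecture. The paper's rigorous argument (Theorem \ref{thm.1}) localizes the real positive maximum of $V_{\ft}$ on $P_{\ft}$, bounds $a_{\ft,n}$ between $p_1(n)e^{nV_{\ft}(\hat w)}$ and $p_2(n)e^{nV_{\ft}(\hat w)}$, deduces \emph{one} singularity on the positive real axis by Pringsheim, and then captures the remaining singularities only by assuming Galois irreducibility; nothing in the paper handles complex critical points, boundary critical points of proper faces on their own terms, higher sheets of the analytic continuation, or a possibly positive-dimensional $X_{\ft}$. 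A genuine steepest-descent or Picard--Lefschetz argument on the universal abelian cover would be needed to make your third step rigorous, and no such argument exists in the paper or the cited literature. The three obstacles you flag at the end are the right ones, and they are exactly why this remains Conjecture \ref{conj.1} rather than a theorem; what your first two steps actually establish is Theorem \ref{thm.critV}, not the conjecture.
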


\subsection{Partial results}
\lbl{sub.partial}

Conjecture \ref{conj.1} is known to hold in the following cases:

\begin{itemize}
\item[(a)] For $0$-dimensional \gterm s: see Theorem \ref{thm.kempty}.
\item[(b)] For positive \sterm s: see Theorem \ref{thm.1}.
\item[(c)] For $1$-dimensional \gterm s, see \cite{Ga3}.
\end{itemize}
Since the finite sets $S_{\ft}$ and $\S_{\ft}$ that appear in Conjecture
\ref{conj.1} are in principle computable (as explained in Section 
\ref{sub.compexp}), one may try to check random examples. We give some
evidence in Section \ref{sec.examples}. We refer the reader to \cite{GV}
for an interesting class of $1$-dimensional examples related to
$6j$-symbols, and of interest to 
atomic physics and low dimensional topology.

The following proposition follows
from the classical fact concerning singularities of hypergeometric
series; see for example \cite[Sec.5]{O} and \cite{No}.

\begin{theorem}
\lbl{thm.kempty}
Suppose that $\ft_{n,k}$ is 0-dimensional \gterm\ as in \eqref{eq.defterm}, 
with $k=\emptyset$. Then $G_{\ft}(z)$ is a hypergeometric series and 
Conjecture \ref{conj.1} holds.
\end{theorem}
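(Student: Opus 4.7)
The plan is to reduce Theorem \ref{thm.kempty} to the classical singularity theory of generalized hypergeometric series. With $k$ empty, Definition \ref{def.hyperg} gives $\ft_n = C_0^n \prod_{j=1}^J (a_{j,0} n)!^{\e_j}$ where $a_{j,0} := v_0(A_j)$, and since the support is a single point we have $a_{\ft,n} = \ft_n$. A direct computation, using the identity $(a_{j,0}(n+1))!/(a_{j,0} n)! = a_{j,0}^{a_{j,0}} \prod_{m=1}^{a_{j,0}}(n + m/a_{j,0})$ for $a_{j,0}\geq 0$, yields
$$
\frac{a_{\ft,n+1}}{a_{\ft,n}} = \lambda \prod_{j=1}^J \prod_{m=1}^{a_{j,0}} \left(n + \frac{m}{a_{j,0}}\right)^{\e_j}, \qquad \lambda := C_0 \prod_{j=1}^J a_{j,0}^{\e_j a_{j,0}}.
$$
The balance condition $\sum_j \e_j a_{j,0} = 0$ ensures that the numerator and denominator of this rational function of $n$ have the same degree; absorbing one denominator factor of $(n+1)$ (which is always available once the sum is nontrivial) into the conventional $z^n/n!$ factor of a generalized hypergeometric series, I would identify $G_\ft(z)$ as a specialization of a balanced ${}_pF_{p-1}$ in the variable $\lambda z$. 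This proves the first assertion of the theorem.

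For the second assertion, I would invoke the classical result (see \cite[Sec.5]{O} and \cite{No}) that the only finite singularity of a ${}_pF_{p-1}$ in the standard variable is at $1$, so that $\S_\ft \subset \{0,\lambda^{-1}\}$ after undoing the rescaling. On the $S_\ft$ side, the variational system \eqref{eq.vara} is empty when $r=0$, so $X_\ft = \{0\}$ by the convention of Definition \ref{def.vareq}; substituting $A_j(w) = a_{j,0}$ into \eqref{eq.StD} gives
$$
\CV_\ft = \left\{ C_0^{-1} \prod_{j: a_{j,0} \neq 0} a_{j,0}^{-\e_j a_{j,0}} \right\} = \{\lambda^{-1}\}.
$$
Since the only face of the degenerate polytope $P_\ft$ is $P_\ft$ itself, we obtain $S_\ft = \{0,\lambda^{-1}\}$ and the inclusion $\S_\ft \subset S_\ft$ follows.

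The main point requiring care is purely bookkeeping: indices with $a_{j,0} = 0$ contribute trivial factors on both sides and can be ignored consistently; and the possibility that $G_\ft(z)$ extends analytically across $z = \lambda^{-1}$ (when the Gauss-type convergence criterion is satisfied at $z'=1$) only strengthens the inclusion, making $\S_\ft$ a proper subset of $S_\ft$. No deeper obstacle is anticipated: the zero-dimensional case collapses to a direct verification inside the well-understood classical theory of generalized hypergeometric functions.
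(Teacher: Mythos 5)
Your proof is correct and follows essentially the same route as the paper: identify $G_{\ft}(z)$ as a generalized hypergeometric series and invoke the classical fact that its only finite nonzero singularity is at $\lambda^{-1}$, then compare with the convention $X_{\ft}=\{0\}$ and the resulting $\CV_{\ft}=\{\lambda^{-1}\}$. The only difference is presentational: the paper cites \cite[Sec.5]{O} and \cite{No} for the hypergeometric representation and its singularity, whereas you make the coefficient-ratio computation and the identification with a ${}_pF_{p-1}$ in the rescaled variable $\lambda z$ explicit; your added remark that the inclusion $\S_{\ft}\subset S_{\ft}$ may be proper (when the series in fact converges on the boundary) is a useful precision not spelled out in the paper.
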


When $\ft$ is positive dimensional, the generating series $G_{\ft}(z)$
is no longer hypergeometric in general. To state our next result, recall
that a finite sybset $S \subset \bar\BQ$ of algebraic numbers is 
{\em irreducible} over $\BQ$ if the Galois group $\text{Gal}(\bar\BQ,\BQ)$
acrs transitively on $S$.

\begin{definition}
\lbl{def.shyperg}
A {\em special hypergeometric term} $\ft_{n,k}$ (in short, {\em \sterm })
is an expression of the form:
\begin{equation}
\lbl{eq.tnk}
\ft_{n,k}=C_0^n \prod_{i=1}^r C_i^{r_i} \prod_{j=1}^J \binom{B_j(n,k)}{C_j(n,k)}.
\end{equation}
where $C \in \BQ$, $L$ and $B_j$ and $C_j$ are integral linear forms
in $(n,k)$. 
We will assume that for every $n \in \BN$, the support of $\ft_{n,k}=0$ as
a function of $k \in \BZ^r$ is finite. We will call such a term {\em positive}
if $C_i>0$ for $i=0,\dots,r$.
\end{definition}

\begin{lemma}
\lbl{lem.specgen}
\rm{(a)}
A \gterm\ is the ratio of two \sterm s. In other words, it
can always be written in the form:
\begin{equation}
\lbl{eq.tnk3}
\ft_{n,k}=C^{L(n,k)} \prod_{j=1}^s \binom{B_j(n,k)}{C_j(n,k)}^{\e_j}
\end{equation}
for some integral linear forms $B_j,C_j$ and signs $\e_j$.  
\newline
\rm{(b)} The set of \sterm s is an abelian monoid with respect to 
multiplication, whose corresponding abelian group is the 
set of \gterm s.
\end{lemma}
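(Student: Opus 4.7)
The plan for (a) is to partition the factorials by the sign of their exponent and rewrite the resulting ratio of factorial products as a ratio of two multinomial coefficients sharing a common ``top,'' each of which then telescopes into a product of ordinary binomials.

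Write the \gterm\ as $\ft = C_0^n \prod_{i=1}^r C_i^{k_i} \prod_{j=1}^J A_j(n,k)!^{\e_j}$ and split the index set $\{1,\dots,J\}$ as $P = \{j : \e_j = +1\}$ and $N = \{j : \e_j = -1\}$. The balance condition \eqref{eq.Ajsum} reads $\sum_{j \in P} A_j = \sum_{j \in N} A_j$; call this common integral linear form $S$. Each $A_j$ is $\ge 0$ on the Newton polytope $P_\ft$ by \eqref{eq.Pft}, so $S \ge 0$ there too. The multinomial identity
$$\binom{S}{A_{j_1},\ldots,A_{j_m}} \;=\; \frac{S!}{A_{j_1}!\cdots A_{j_m}!}\qquad \Bigl(\text{valid when }\textstyle\sum_\ell A_{j_\ell}=S\Bigr)$$
applied to $P$ and $N$ separately yields
$$\prod_{j=1}^J A_j!^{\e_j} \;=\; \frac{\binom{S}{\{A_j\}_{j\in N}}}{\binom{S}{\{A_j\}_{j\in P}}}.$$
Now expand each multinomial by the standard telescoping
$$\binom{S}{A_{j_1},\ldots,A_{j_m}} \;=\; \prod_{\ell=1}^{m-1} \binom{S - A_{j_1} - \cdots - A_{j_{\ell-1}}}{A_{j_\ell}},$$
observing that every factor $\binom{B}{C}$ has $B = \sum_{i \ge \ell} A_{j_i} \ge A_{j_\ell} = C \ge 0$ on $P_\ft$, so each binomial is well defined. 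This places $\ft$ in the form \eqref{eq.tnk3}, and simultaneously exhibits it as a ratio of two \sterm s: the numerator packages the monomial prefactor $C_0^n \prod_i C_i^{k_i}$ together with the binomials arising from $\binom{S}{\{A_j\}_{j \in N}}$, and the denominator consists of the binomials arising from $\binom{S}{\{A_j\}_{j \in P}}$.

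For (b), closure of \sterm s under multiplication is immediate (concatenate the binomial factors and multiply the scalar coefficients), the constant term $1$ is the identity, and commutativity and associativity are obvious; hence \sterm s form an abelian monoid $M$. The associated abelian group (Grothendieck group) consists of formal quotients $\ft_1/\ft_2$ of \sterm s. One inclusion is trivial: each $\binom{B}{C} = B!/(C!(B-C)!)$ is itself a balanced product of factorials, so a quotient of \sterm s is automatically a \gterm. The reverse inclusion is exactly (a). The only subtlety in (a) is the degenerate case $P = \emptyset$ or $N = \emptyset$ (or some $A_j \equiv 0$): then $S \equiv 0$ forces the corresponding factorials to collapse to $1$ on the support, so no binomials are produced and $\ft$ is already a monomial, trivially \sterm. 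The real content of the argument is the observation that the balance condition is precisely what makes the numerator and denominator multinomials share a common top $S$.
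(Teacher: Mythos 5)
Your argument is correct and is essentially the paper's own: introduce the common top $S=\sum_{j\in P}A_j=\sum_{j\in N}A_j$ (what the paper calls $A(n,k)$), which exists precisely because of the balance condition, and write the factorial product as a ratio of two multinomial coefficients with that shared top. You go a step further than the paper by spelling out the telescoping of each multinomial into ordinary binomials and by handling the degenerate case $S\equiv 0$, but these only make explicit what the paper leaves implicit.
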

The proof of (a) follows from writing a \gterm\ in the form:
$$
\ft_{n,k}=C_0^n \prod_{i=1}^r C_i^{r_i} 
\frac{\prod_{j: \e_j=1}^J A_j(n,k)!}{A(n,k)!}
\frac{A(n,k)!}{\prod_{j: \e_j=-1}^J A_j(n,k)!}
$$
where
$$
A(n,k)=\sum_{j: \e_j=1}^J A_j(n,k)=\sum_{j: \e_j=-1}^J A_j(n,k).
$$

To illustrate part (a) of the above lemma for $0$-dimensional \gterm s,
we have:
$$
\frac{(30n)! n!}{(16n)! (10n)! (5n)!}=
\binom{30n}{16n} \binom{14n}{10n} \binom{5n}{4n}^{-1}.
$$
The above identity also shows that if a \gterm\ takes integer values, 
it need not be a \sterm . This phenomenon was studied by Rodriguez-Villegas;
see \cite{R-V}.

\begin{theorem}
\lbl{thm.1}
Fix a positive \sterm\ $\ft_{n,k}$ such
that $\S_{\ft}\setminus\{0\}$ is irreducible over $\BQ$. Then, 
$\S_{\ft} \subset \CV_{\ft} \subset S_{\ft}$ and 
Conjecture \ref{conj.1} holds. 
\end{theorem}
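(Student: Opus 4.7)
The plan is to combine a Laplace-type saddle-point analysis of the positive multisum with Pringsheim's theorem and the Galois invariance of the sets $\CV_{\ft|_{\D}}$; the irreducibility hypothesis feeds into the final step.

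First I would apply Stirling's formula to each factorial in $\ft_{n,k}$, using the balance condition \eqref{eq.Ajsum} to kill the $n^{n\sum\e_j A_j(w)}$ and $e^{-n\sum \e_j A_j(w)}$ factors, to obtain
\[
\ft_{n,\lfloor nw \rfloor}=\exp\!\big(nV(w)+O(\log n)\big),
\]
where the potential $V(w)=\log C_0+\sum_i w_i\log C_i+\sum_{j}\e_j A_j(w)\log A_j(w)$ is continuous on $P_{\ft}$. A direct computation, again using that $\sum_j \e_j v_i(A_j)=0$ as a consequence of \eqref{eq.Ajsum}, shows that the critical-point equations $\partial_{w_i}V=0$ are precisely the variational equations \eqref{eq.vara}, and that at any interior critical point $w^*$ one has
\[
\exp\!\big(V(w^*)\big)=\Big(C_0\prod_{j}A_j(w^*)^{-\e_j v_0(A_j)}\Big)^{-1}\in \CV_{\ft}^{-1}.
\]
The same computation applied on each face $\D$ identifies the interior critical values of $V|_{\D}$ with the reciprocals of $\CV_{\ft|_{\D}}$.

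Next, since $\ft$ is positive, every summand of $a_{\ft,n}$ is nonnegative, and a standard Laplace/Perron estimate yields $\limsup_{n\to\infty} a_{\ft,n}^{1/n}=\exp(\max_{w\in P_{\ft}}V(w))$. The maximum is attained at a critical point of $V$ in the relative interior of some face $\D$, so the exponential growth rate is $\l_0^{-1}$ for some $\l_0\in \CV_{\ft|_{\D}}\subset S_{\ft}$. Because $a_{\ft,n}>0$, Pringsheim's theorem promotes $\l_0$ from a radius of convergence to an actual singularity of $G_{\ft}$, so $\l_0\in\S_{\ft}$. Since $\ft$ is a positive \sterm, the constants $C_i$ are rational, hence every $\CV_{\ft|_{\D'}}$ is cut out by polynomial equations with coefficients in $\BQ$ and is Galois-invariant over $\BQ$. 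The irreducibility of $\S_{\ft}\setminus\{0\}$ then forces the full Galois orbit of $\l_0$, which \emph{is} $\S_{\ft}\setminus\{0\}$, to lie inside $\CV_{\ft|_{\D}}\subset S_{\ft}$, giving the conjectured containment $\S_{\ft}\subset S_{\ft}$.

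The hard part will be upgrading this to $\S_{\ft}\subset \CV_{\ft}$, i.e.\ ruling out a proper face $\D$ as the dominant one. Here I would exploit concavity: for a positive \sterm, $V$ is a sum of binary entropies of the form $-B_j(w)\, h(C_j(w)/B_j(w))$ plus a linear term, and is therefore concave on $P_{\ft}$ and strictly concave transverse to any face on which no binomial degenerates. The plan is to show by a direct variational comparison that moving from the relative interior of any proper face into the interior of $P_{\ft}$ strictly increases $V$, because at least one binomial factor that is trivial on $\D$ becomes nontrivial (and contributes positive entropy) off $\D$. This forces the dominant face to be $P_{\ft}$ itself, so that the orbit argument of the previous paragraph places $\S_{\ft}$ inside $\CV_{\ft}$. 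The inclusion $\CV_{\ft}\subset S_{\ft}$ is then immediate from the definition of $S_{\ft}$, completing the proof and establishing Conjecture \ref{conj.1} in this case.
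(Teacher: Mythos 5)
Your proof follows essentially the same approach as the paper's: Stirling and positivity give $a_{\ft,n}\asymp e^{nV_{\ft}(\hat w)}$ for an interior maximizer $\hat w$ of the potential, Pringsheim's theorem promotes the resulting radius of convergence to a genuine element of $\S_{\ft}$, and Galois transitivity of $\S_{\ft}\setminus\{0\}$ together with the $\BQ$-rationality of the Variational Equations then forces $\S_{\ft}\subset\CV_{\ft}\subset S_{\ft}$. The step you flag as the ``hard part'' --- ruling out a maximizer in the relative interior of a proper face --- is precisely the point the paper handles most tersely (asserting interiority from nonnegativity of $\Phi$ and compactness, which is not by itself sufficient); your boundary-gradient/concavity argument, exploiting that $\Phi'$ blows up toward $0$ and $1$ and that each binomial entropy is concave, is a sound and in fact more careful justification of that same step.
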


In a forthcoming publication we will give a proof of Conjecture \ref{conj.1}
for 1-dimensional \gterm s; see \cite{Ga3}.
Let us end this section with an inverse type (or geometric realization)
problem.

\begin{problem}
\lbl{prob.2}
Given $\l \in \bar\BQ$, does there exist a special term $\ft$ so that 
$\l \in S_{\ft}$?
\end{problem}

\subsection{Laurent polynomials: a source of \sterm s}
\lbl{sub.laurent}

This section, which is of independent interest, associates an \sterm\
$\ft_F$ to a Laurent polynomial $F$ with the property
that the generating series $G_{\ft_F}(z)$ is identified with the 
trace of the resolvant $R_F(z)$ of $F$. 
Combined with Theorem \ref{thm.G}, this implies that $R_F(z)$ is
a $G$-function. 

If $F \in M_N(\bar\BQ[x_1^{\pm 1},\dots, x_d^{\pm 1}])$ is a square matrix
of size $N$ with entries Laurent polynomials in $d$ variables, let 
$\Tr(F)$ denote the {\em constant term} of its usual trace.
The {\em moment generating series} of $F$ is the power series

\begin{equation}
\lbl{eq.GF}
G_F(z)=\sum_{n=0}^\infty \Tr(F^n) z^n.
\end{equation}

\begin{theorem}
\lbl{thm.3}
\rm{(a)}
For every $F \in \bar\BQ[x_1^{\pm 1},\dots, x_d^{\pm 1}]$ there exists a \sterm\
$\ft_F$ so that %for all $n \in \BN$ we have:
\begin{equation}
\lbl{eq.Pta}
G_F(z)=G_{\ft_F}(z).%\Tr(F^n)=a_{\ft_P,n}.
\end{equation}
Consequently, $G_F(z)$ is a $G$-function.
\newline
\rm{(b)} The Newton polytope $P_{\ft_F}$ depends is a combinatorial simplex
which depends only on the monomials
that appear in $F$ and not on their coefficients.
\newline
\rm{(c)}
For every $F \in M_N(\bar\BQ[x_1^{\pm 1},\dots, x_d^{\pm 1}])$, $G_F(z)$ is a 
$G$-function.
\end{theorem}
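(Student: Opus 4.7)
The plan is to write $\Tr(F^n)$ explicitly as a multisum of an \sterm\ $\ft_F$ via the multinomial theorem, then invoke Theorem \ref{thm.G}. Writing $F(x) = \sum_{j=1}^s c_j x^{\alpha_j}$ with $c_j \in \bar\BQ$ and $\alpha_j \in \BZ^d$, the multinomial theorem yields
\begin{equation*}
F(x)^n = \sum_{\substack{k_j \geq 0 \\ k_1 + \cdots + k_s = n}} \frac{n!}{k_1! \cdots k_s!} \prod_{j=1}^s c_j^{k_j} \, x^{\sum_j k_j \alpha_j},
\end{equation*}
so that $\Tr(F^n)$ is the subsum constrained by the $d+1$ linear equations $\sum_j k_j = n$ and $\sum_j k_j \alpha_j = 0$. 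Solving these (generically of rank $d+1$) for $d+1$ of the $k_j$ in terms of $n$ and $r := s-d-1$ free variables $k' = (k'_1,\ldots,k'_r)$ expresses each $k_j$ as an integral linear form $k_j(n, k')$. The multinomial telescopes into a product of binomials $\binom{n}{k_1}\binom{n-k_1}{k_2}\cdots$ with linear-form arguments, and $\prod_j c_j^{k_j(n,k')}$ regroups as $C_0^n \prod_{i=1}^r C_i^{k'_i}$ for suitable $C_i \in \bar\BQ$. This produces $\ft_F$ in the form of Definition \ref{def.shyperg}, with $a_{\ft_F, n} = \Tr(F^n)$ and hence $G_F = G_{\ft_F}$, so Theorem \ref{thm.G} gives part (a).

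For part (b), the linear forms $k_j(n, k')$ arise from Gaussian elimination on the constraint matrix whose rows are the all-ones vector and the coordinates of the $\alpha_j$'s; this matrix is determined by the monomial support of $F$ alone. Consequently the defining inequalities $k_j(n, k') \geq 0$ of $P_{\ft_F}$ depend only on the monomials, not on the coefficients $c_j$. Combinatorially, $P_{\ft_F}$ is the affine slice of the standard $(s-1)$-simplex $\{k \geq 0, \sum k_j = 1\}$ by the subspace $\{\sum k_j \alpha_j = 0\}$, a polytope whose combinatorial type is determined by the position of the $\alpha_j$'s.

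For part (c), I would argue directly via period integrals, since the more delicate combinatorics of closed walks on a weighted graph seems to preclude an immediate multisum description. The formal identity
\begin{equation*}
R(z, x) \; := \; \sum_{n=0}^\infty \tr(F(x)^n)\, z^n \; = \; \tr\bigl((I - zF(x))^{-1}\bigr) \; \in \; \bar\BQ(z, x)
\end{equation*}
exhibits the $\tr$-generating series as a rational function. Extracting the constant term in $x$ yields
\begin{equation*}
G_F(z) = \frac{1}{(2\pi i)^d} \oint_{T^d} R(z, x) \, \frac{dx_1}{x_1} \cdots \frac{dx_d}{x_d},
\end{equation*}
a period of a rational function of $(z, x)$, which is a $G$-function of $z$ by a theorem of Bombieri--André (cf.\ \cite{An2}).

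The chief technical obstacle is the unimodularity issue in part (a): the integer solutions of the constraint system generally form a proper sublattice of $\BZ^s$, so writing the eliminated $k_j$ as \emph{integral} linear forms in the free variables $(n, k')$ may require rescaling $n$ by the lattice index, and the binomial-telescoping step must be carried out on the correct unimodular triangulation of the Newton polytope. This is the standard Ehrhart-type bookkeeping and does not affect the $G$-function conclusion of (a) or (c), but it is what one must handle carefully to land in the exact \sterm\ form of Definition \ref{def.shyperg}.
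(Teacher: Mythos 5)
Your part (a) follows the paper's route: expand $F^n$ by the multinomial theorem, read off $\Tr(F^n)$ as the subsum constrained to $\sum k_j\alpha_j=0$, and recognize the multinomial weight $\frac{n!}{\prod k_j!}\prod c_j^{k_j}$ as a special term. The paper keeps all $|\calA|$ variables and takes $P_{\ft_F}=\D_{r-1}\cap W$ as the Newton polytope rather than eliminating $d+1$ of the $k_j$; your elimination step and your caveat about the lattice index/unimodularity is a fair technical concern that the paper quietly absorbs into its conventions, but the content is the same.

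In part (b) you have a genuine gap. You argue that the \emph{combinatorial type} of $P_{\ft_F}$ depends only on the monomial support, which is the ``depends only on the monomials'' half of the claim, but you never establish that $P_{\ft_F}$ is actually a \emph{combinatorial simplex}. The paper proves this via Lemma \ref{lem.conesimplex}: if a linear subspace $W$ of dimension $s$ meets the interior $C^o$ of the cone on the coordinate vectors, then $\D_{n-1}\cap W$ is a combinatorial simplex, proved by downward induction on $s$ (intersecting simplicial cones with hyperplanes). Moreover the hypothesis $W\cap C^o\neq\emptyset$ requires the origin to lie in the interior of the Newton polytope of $F$ (``convenient'' $F$); if not, one must first restrict $F$ to the face of its Newton polytope containing the origin. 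Your sketch of (b) contains neither the simplicial-cone lemma nor the convenience reduction, so as written it does not prove the simplex assertion.

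Your part (c) takes a genuinely different route from the paper. You write $G_F(z)=\frac{1}{(2\pi i)^d}\oint_{T^d}\tr((I-zF(x))^{-1})\,\frac{dx_1}{x_1}\cdots\frac{dx_d}{x_d}$, a period of a rational function in $(z,x)$, and invoke the Gauss--Manin/Andr\'e machinery; the paper explicitly credits essentially this argument to C.\ Sabbah as an independent proof for $N=1$. The paper instead stays combinatorial: it writes $(I-zF)^{-1}=\det(I-zF)^{-1}\,\mathrm{Ad}(I-zF)$, expands $\det(I-zF)^{-1}$ by the multinomial theorem to exhibit $\Tr(F^n)$ as a finite sum $\sum_{j\in S}a_{\ft_F^{(j)},n}$ of balanced multisums, and concludes by Theorem \ref{thm.G} together with closure of $G$-functions under addition. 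Your route is shorter but leans on heavier input (regularity of Gauss--Manin plus Andr\'e's period-to-$G$-function theorem, which needs to be invoked precisely, not merely gestured at); the paper's route is longer but stays entirely within the hypergeometric multisum framework already set up in the paper and yields the additional structural information that $G_F$ is a finite sum of the $G_{\ft}$'s it studies. Note also that the paper flags a needed adjustment for (c): the terms $\ft_F^{(j)}$ have \emph{affine} rather than strictly linear forms, which is addressed in Remark \ref{rem.affine}; your period argument sidesteps this but your part (a) argument, if pushed to cover (c), would have to face it.
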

We thank C. Sabbah for providing an independent proof of part (c) when $N=1$ 
using the regularity of the {\em Gauss-Manin connection}. Compare also
with \cite{DvK}.

\subsection{Plan of the proof}
\lbl{sub.plan}

In Section \ref{sec.potential}, we introduce a potential function 
associated to 
a \gterm\ $\ft$ and we show that the set of its critical values coincides
with the set $S_{\ft}$ that features in Conjecture \ref{conj.1}.
This also implies that $S_{\ft}$ is finite.

In Section \ref{sec.stirling} we assign elements of an extended additive
$K$-theory group to a \gterm\ $ft$, and we show that the set of 
their values (under the entropy regulator map) coincides with the set $S_{\ft}$  
that features in Conjecture \ref{conj.1}.

In Section \ref{sec.pfthm1} we give a proof of Theorems \ref{thm.1}
(using results from hypergeometric functions) 
and \ref{thm.kempty} (using an application of Laplace's method), 
which are partial case of our Conjecture \ref{conj.1}. 

In Section \ref{sec.examples} we give several examples that illustrate
Conjecture \ref{conj.1}.

In Section \ref{sec.laurentp} we study a special case of Conjecture 
\ref{conj.1}, with input a Laurent polynomial in many commuting variables.

\subsection{Acknowledgement}
The author wishes to thank Y. Andr\'e, N. Katz, M. Kontsevich,
J. Pommersheim, C. Sabbah, and especially D. Zeilberger for many enlightening 
conversations, R.I. van der Veen for a careful reading of the 
manuscript, and the anonymous referee for comments that improved the 
exposition. An early
version of the paper was presented in an Experimental Mathematics Seminar 
in Rutgers in the spring of 2007. The author wishes to thank D. Zeilberger
for his hospitality.

\section{\Gterm s and potential functions}
\lbl{sec.potential}

\subsection{The Stirling formula and potential functions}
\lbl{sub.potential}

As a motivation of a potential function associated to a \gterm\ , recall 
{\em Stirling formula}, which computes the 
asymptotic expansion of $n!$ (see \cite{O}):

\begin{equation}
\lbl{eq.stirling}
\log n! \sim n \log n -n + \frac{1}{2} \log n + \frac{1}{2} \log(2 \pi)
+O\left(\frac{1}{n}\right),
\end{equation}
For $x>0$, we can define $x!=\Ga(x+1)$. The Stirling formula implies that
for $a >0$ fixed and $n \in \BN$ large, we have:

\begin{equation}
\lbl{eq.stan}
\log (an)! = (n \log n -n) a  + a \log(a) + O\left(\frac{\log n}{n}\right)
\end{equation}
The next corollary motivates our definition of the potential function.

\begin{corollary}
\lbl{cor.app}
For every \gterm\ $\ft$ as in \eqref{eq.defterm}
and every $w$ in the interior of $P_{\ft}$ we have:
\begin{equation}
\lbl{eq.binomialp}
\ft_{n,nw} = e^{n V_{\ft}(w)+O\left(\frac{\log n}{n}\right)}.
\end{equation}
where the potential function $V_{\ft}$ is defined below.
\end{corollary}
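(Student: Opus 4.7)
\medskip

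\noindent\textbf{Proof proposal.}
The strategy is a direct and bookkeeping-free application of Stirling's formula \eqref{eq.stan} to each factorial factor in the product \eqref{eq.defterm}, followed by an invocation of the balance condition \eqref{eq.Ajsum} to cancel the dominant terms.

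The first step is to observe that since each $A_j(n,k)$ is an integral linear form, the substitution $k=nw$ gives $A_j(n,nw)=n\,A_j(w)$ by \eqref{eq.Aw}. Because $w$ lies in the \emph{interior} of $P_{\ft}$, Definition \ref{def.polytope} guarantees $A_j(w)>0$ for every $j$, so each $A_j(n,nw)!$ is in the regime where \eqref{eq.stan} applies (extending factorials by $\Gamma(x+1)$ when $nw$ fails to be an integer, which costs at most an additional $O(\log n)$ inside the exponent from the nearest lattice point). Taking logarithms of \eqref{eq.defterm} at $(n,nw)$ then yields
\begin{equation*}
\log \ft_{n,nw} \;=\; n\log C_0 + n\sum_{i=1}^{r} w_i \log C_i + \sum_{j=1}^{J}\e_j\,\log A_j(n,nw)!,
\end{equation*}
and applying \eqref{eq.stan} factor by factor gives
\begin{equation*}
\sum_{j=1}^{J}\e_j\log A_j(n,nw)! \;=\; n\log n\sum_{j}\e_j A_j(w) \;-\; n\sum_{j}\e_j A_j(w) \;+\; n\sum_{j}\e_j A_j(w)\log A_j(w) \;+\; O(\log n).
\end{equation*}

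The second, and crucial, step is to apply the balance condition \eqref{eq.Ajsum}, evaluated at the point $w$: $\sum_{j}\e_j A_j(w)=0$. This annihilates the $n\log n$ coefficient and the pure $n$ coefficient, leaving only the entropy-like sum $n\sum_j \e_j A_j(w)\log A_j(w)$. Collecting everything into the exponent, one is forced to define the potential function
\begin{equation*}
V_{\ft}(w) \;\eqdef\; \log C_0 + \sum_{i=1}^{r} w_i \log C_i + \sum_{j=1}^{J} \e_j\, A_j(w)\log A_j(w),
\end{equation*}
which is precisely the function that will feature in later sections (and whose critical points give the system \eqref{eq.vara} upon differentiation in $w_i$). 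With this definition, the above computation reads $\log \ft_{n,nw} = n V_{\ft}(w) + O(\log n)$, which is the content of \eqref{eq.binomialp} after dividing the error term by $n$ inside the exponent.

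There is essentially no hard step here: the content of the corollary is just that the balance condition is \emph{designed} to produce an $O(\log n)$ (rather than $O(n\log n)$) error in Stirling's expansion of $\ft_{n,nw}$. The only mild technical nuisance is that $nw$ need not be a lattice point for arbitrary $w\in P_{\ft}^{\circ}$, but this is handled either by restricting to rational $w$ with $n$ in the appropriate arithmetic progression, or, more cleanly, by replacing $x!$ with $\Gamma(x+1)$ throughout and noting that the same Stirling expansion \eqref{eq.stirling} holds uniformly on compact subsets of $x>0$; the restriction to the \emph{interior} of $P_{\ft}$ is exactly what ensures this uniformity for each $A_j(w)$.
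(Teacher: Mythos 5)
Your proof is correct and is exactly the intended (though not explicitly written out) derivation: the paper states the corollary immediately after Stirling's formula \eqref{eq.stirling} and the specialization \eqref{eq.stan}, with the understanding that one applies \eqref{eq.stan} with $a=A_j(w)>0$ to each factorial and then invokes the balance condition $\sum_j\e_j A_j=0$ to annihilate the $a(n\log n - n)$ contributions. You also handle the two technicalities in the right way: interpreting the exponent in \eqref{eq.binomialp} as $n\bigl(V_{\ft}(w)+O(\log n/n)\bigr)$, i.e.\ total error $O(\log n)$, and extending $x!$ to $\Gamma(x+1)$ to make sense of $\ft_{n,nw}$ when $nw$ is not a lattice point, with the interiority of $w$ in $P_{\ft}$ guaranteeing the $A_j(w)$ stay bounded away from $0$.
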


\begin{definition}
\lbl{def.potential}
Given a \gterm\ $\ft$ as in \eqref{eq.defterm} 
define its corresponding {\em potential function} $V_{\ft}$ by:
\begin{equation}
\lbl{eq.potentialf}
V_{\ft}(w)=C(w)+
\sum_{j=1}^J \e_j A_j(w) \log(A_j(w))
\end{equation}
where $w=(w_1,\dots,w_r)$, 
\begin{equation}
\lbl{eq.Cnk}
C(n,k)=\log C_0 \cdot n + \log C_1 \cdot k_1 + \dots \log C_r \cdot k_r,
\end{equation}
and 
\begin{equation}
\lbl{eq.Cw}
C(w)=\log C_0 + \log C_1 w_1 + \dots \log C_r w_r.
\end{equation}
%Let $X_{\ft}$ denote the {\em critical points} 
%and {\em critical values} of the potential $V_{\ft}$.
$V_{\ft}$ is a multivalued analytic function on the complement of the 
{\em linear hyperplane arrangement} $\BC^{r}\setminus\calA_{\ft}$, where
\begin{equation}
\lbl{eq.arr}
\calA_{\ft}=\{ w\in \BC^{r} \,| \, \prod_{j=1}^J A_j(w)=0 \}.
\end{equation}
\end{definition}
In fact, let
\begin{equation}
\lbl{eq.varpi2}
\varpi: \hat{\BC}^{r} \longto \BC^{r}\setminus\calA_{\ft}
\end{equation}
denote the universal abelian cover of $\BC^{r}\setminus\calA_{\ft}$.

The next theorem relates the critical points and critical values of $V_{\ft}$
with the sets $X_{\ft}$ and $\CV_{\ft}$ from Definition \ref{def.vareq}.

\begin{theorem}
\lbl{thm.critV}
\rm{(a)} For every \gterm\ $\ft$, $\varpi^{-1}(X_{\ft})$ 
coincides with the set of critical points of $V_{\ft}$.
\newline
\rm{(b)} If $w$ is a critical point of $V_{\ft}$, then
\begin{equation}
\lbl{eq.lambdar}
e^{-V_{\ft}(w)}= C^{-v_0(L)} \prod_{j=1}^J A_j(w)^{-\e_j v_0(A_j)}=
C^{-v_0(L)} \prod_{j: A_j(w) \neq 0} A_j(w)^{-\e_j v_0(A_j)}.
\end{equation}
Thus $\CV_{\ft}$ coincides with the exponential of the set of the
negatives of the critical values of $V_{\ft}$.
\newline
\rm{(c)} For every face $\D$ of the Newton polytope of $\ft$ we have:
$$
V_{\left(\ft|_{\D}\right)}=\left(V_{\ft}\right)|_{\D}.
$$
\rm{(d)} $S_{\ft}$ is a finite subset of $\bar{\BQ}$ and 
$K_{\ft}$ is a number field.
\end{theorem}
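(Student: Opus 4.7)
\medskip

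\noindent\textbf{Proof proposal.} My plan is to treat the four parts in order, with (a) and (b) reduced to direct calculations and (c), (d) obtained essentially as formal consequences.

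For part (a), I would compute $\partial V_{\ft}/\partial w_i$ directly. Using $\partial A_j/\partial w_i = v_i(A_j)$ and the product rule on $A_j(w)\log A_j(w)$, one gets
\[
\frac{\partial V_{\ft}}{\partial w_i}(w)=\log C_i+\sum_{j=1}^J \e_j v_i(A_j)\bigl(\log A_j(w)+1\bigr).
\]
The key observation is that the constant ``$+1$'' terms collectively contribute $\sum_j\e_j v_i(A_j)$, which vanishes by the balance condition \eqref{eq.Ajsum} (taking the coefficient of $k_i$ in $\sum_j \e_j A_j=0$). Exponentiating the resulting equation $\log C_i+\sum_j \e_j v_i(A_j)\log A_j(w)=0$ yields precisely the variational equation \eqref{eq.vara}. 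Since $V_{\ft}$ is single-valued on $\hat{\BC}^r$ while the exponentiated equation is well-defined on $\BC^r\setminus \calA_{\ft}$, the critical set on the cover is exactly $\varpi^{-1}(X_{\ft})$, independent of branch choices for $\log$.

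For part (b), I would use an Euler-type substitution: at a critical point, solve the relations in (a) for $\log C_i$ and plug into the definition of $V_{\ft}$. Swapping the order of summation and using $A_j(w)=v_0(A_j)+\sum_{i=1}^r v_i(A_j) w_i$ telescopes the bracket so that only the $v_0$ terms survive, giving $V_{\ft}(w)=\log C_0+\sum_j \e_j v_0(A_j)\log A_j(w)$. Exponentiating yields the claimed formula. The ``$\prod_{j:A_j(w)\neq 0}$'' version follows because the balance condition forces the vanishing $A_j$'s to contribute equally with $\e_j=\pm 1$, and by convention $0\log 0=0$, so their logarithmic contributions cancel. Part (c) is then essentially bookkeeping: a face $\D$ is cut out by setting some $A_{j}$ to zero, on which the corresponding terms $\e_j A_j(w)\log A_j(w)$ vanish identically; matching this against the definition of $V_{\ft|_\D}$ shows the two expressions agree.

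For part (d), which I expect to be the main conceptual point, the finiteness of $S_{\ft}$ cannot rest on finiteness of $X_{\ft}$ (which is not known). Instead I would argue as follows. By (b), the function $\lambda(w)=C_0^{-1}\prod_j A_j(w)^{-\e_j v_0(A_j)}$ is a \emph{rational} function on $\BC^r\setminus\calA_{\ft}$ with $\bar\BQ$-coefficients, and it agrees with $e^{-V_{\ft}(w)}$ on the critical set. Since $dV_{\ft}=0$ on the critical locus, we have $d\log\lambda=0$ there, so $\lambda$ is locally constant on (and hence constant on each irreducible component of) the critical variety. Being defined over $\bar\BQ$, the variety $X_{\ft}\subset \BC^r\setminus\calA_{\ft}$ has finitely many irreducible components, so $\CV_{\ft}=\lambda(X_{\ft})$ is a finite subset of $\bar\BQ$. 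Taking the union over the finitely many faces of $P_{\ft}$ yields $S_{\ft}$ finite, and $K_{\ft}=\BQ(S_{\ft})$ is a number field. The only subtle point is ensuring that irreducible components of $X_{\ft}$ genuinely contribute single values; this is where the local constancy argument via $d\log\lambda=0$ is essential.
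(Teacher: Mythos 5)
Your proposal is correct and follows essentially the same path as the paper: compute $\partial V_{\ft}/\partial w_i$ using $\frac{d}{dx}(x\log x)=\log x+1$, invoke the balance condition \eqref{eq.Ajsum} to cancel the constant terms, exponentiate to obtain the Variational Equations \eqref{eq.vara}; for (b), collect terms via the critical-point relations (the paper's version of your Euler substitution); and for (d), use that critical values are constant on the finitely many irreducible components of the critical variety, which is defined over $\bar\BQ$. One small slip in (b): the second equality in \eqref{eq.lambdar} holds simply because no $A_j$ vanishes at a critical point (a vanishing factor would make \eqref{eq.vara} ill-defined or force $A_j\equiv 0$), so the two products are literally the same set of factors --- there is no cancellation among vanishing terms as you suggest.
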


\subsection{Proof of Theorem \ref{thm.critV}}
\lbl{sub.thmcritV}

Let us fix a \gterm\ $\ft$ as in \eqref{eq.defterm}
and a face $\D$ of its Newton polytope $P_{\ft}$. Without loss of
generality, assume that $\D=P_{\ft}$. Since 

\begin{equation}
\lbl{eq.derspecial}
\frac{d}{dx}(x \log(x))=\log(x)+1
\end{equation}
it follows that for every $i=1,\dots,r$ and every $j=1,\dots,J$
we have:

\begin{equation}
\frac{\pt}{\pt w_i} A_j(w) \log(A_j(w))=v_i(A_j) \log(A_j(w)) + v_i(A_j).
\end{equation}
Adding up with respect to $j$, using the balancing condition
of Equation \eqref{eq.Ajsum}, and the notation of Section \ref{sub.gseries}
applied to the linear form of Equation \eqref{eq.Cnk}, it follows that

\begin{eqnarray*}
\frac{\pt}{\pt w_i} V_{\ft}(w)&=&  \log C_i +
\sum_{j=1}^J \e_j v_i(A_j) \log(A_j(w)) + \sum_{j=1}^J \e_j v_i(A_j) \\
&=& \log C_i +
\sum_{j=1}^J \e_j v_i(A_j) \log(A_j(w)).
\end{eqnarray*}
This proves that the critical points $w=(w_1,\dots,w_r)$ of $V_{\ft}$ 
are the solutions
to the following system of {\em Logarithmic Variational Equations}:
\begin{equation}
\lbl{eq.varlog}
 \log C_i+
\sum_{j=1}^J \e_j v_i(A_j) \log(A_j(w)) \in \BZ(1) \qquad 
\text{for} \,\,i=1,\dots,r 
\end{equation}
where, for a subgroup $K$ of $(\BC,+)$ and an integer $n \in \BZ$, we define 

\begin{equation}
\lbl{eq.Kn}
K(n)=(2 \pi i)^n K.
\end{equation}
Exponentiating, it follows that $w$ satisfies
the Variational Equations \eqref{eq.vara}, and concludes the proof of part (a).

For part (b), we will show that if $w$ is a critical point of $V_{\ft}$, 
the corresponding critical value is given by:
\begin{equation}
\lbl{eq.criticalvV}
V_{\ft}(w)= \log C_0 + \sum_{j=1}^J \e_j v_0(A_j) \log(A_j(w)) \in \BC/\BZ(1).
\end{equation}
Exponentiating, we deduce the first equality of Equation \eqref{eq.lambdar}.
The second equality follows from the fact that $A_j(w) \neq 0$
for all critical points $w$ of $V_{\ft}$.

To show \eqref{eq.criticalvV}, 
observe that for any linear form $A(n,k)$ we have:
$$
A(w)=v_0(A)w_0 + \sum_{i=1}^r v_i(A) w_i.
$$
Suppose that $w$ satisfies the Logarithmic
Variational Equations \eqref{eq.varlog}.
Using the definition of the potential function, and collecting terms
with respect to $w_1,\dots,w_r$ it follows that

\begin{eqnarray*}
V_{\ft}(w) &=&  \log C_0 + \sum_{j=1}^J \e_j v_0(A_j) \log(A_j(w)) 
+ \sum_{i=1}^r w_i
\left(\log C_i  + \sum_{j=1}^J \e_j v_i(A_j) \log(A_j(w))\right) \\
 &=&  \log C_0  + \sum_{j=1}^J \e_j v_0(A_j) \log(A_j(w)) .
\end{eqnarray*}
This concludes part (b). Part (c) follows from set-theoretic 
considerations, and part (d) follows from the following facts:

\begin{itemize}
\item[(i)]
an analytic function is constant on
each component of its set of critical points,
\item[(ii)]
the set of critical points are the complex points of an affine variety
defined over $\BQ$ by \eqref{eq.vara},
\item[(iii)]
every affine variety has finitely many connected components.
\end{itemize}
This concludes the proof of Theorem \ref{thm.critV}.
\qed

\section{\Gterm s, the entropy function and additive $K$-theory}
\lbl{sec.stirling}

In this section we will assign elements of an extended additive $K$-theory
group to a \gterm , and using them, we will identify our finite set $\CV_{\ft}$
from Definition \eqref{def.vareq} 
with the values of the constructed elements under a regulator map;
see Theorem \ref{thm.K}.

\subsection{A brief review of the entropy function and additive $K$-theory}
\lbl{sub.entropy}

In this section we will give a brief summary of an extended version of
additive $K$-theory and the entropy function following \cite{Ga2}, and
motivated by \cite{Ga1}. This section is independent of the rest of the 
paper, and may be skipped at first reading.

\begin{definition}
\lbl{def.entropy}
Consider the {\em entropy function} $\Phi$, defined by:
\begin{equation}
\lbl{eq.entropy}
\Phi(x)=-x \log (x)-(1-x) \log(1-x).
\end{equation}
for $x \in (0,1)$.
\end{definition}

$\Phi(x)$ is a multivalued analytic function on $\BC\setminus\{0,1\}$, 
given by the double integral of a rational function as follows from:

\begin{equation}
\lbl{eq.doubleint}
\Phi''(x)=-\frac{1}{x}-\frac{1}{1-x}.%\frac{1}{x(x-1)}.
\end{equation}
For a detailed description of the analytic continuation of $\Phi$,
we refer the reader to \cite{Ga2}. 
Let $\hat{\BC}$ denote the {\em universal abelian cover} of 
$\BC^{**}$. In \cite[Sec.1.3]{Ga2} we show that $\Phi$ has an analytic 
continuation:

\begin{equation}
\lbl{eq.anPhi}
\Phi: \hat{\BC} \longto \BC.
\end{equation} 
In \cite[Def.1.7]{Ga2} we show that $\Phi$ satisfies three 4-term relations,
one of which is the analytic continuation of \eqref{eq.4term}. The other
two are dictated by the variation of $\Phi$ along the cuts $(1,\infty)$
and $(-\infty,0)$.

Using the three 4-term relations of $\Phi$, we introduce an extended
version $\wb2C$  in \cite[Def.1.7]{Ga2}:

\begin{definition}
\lbl{def.extb}
The {\em extended group} $\wb2C$ is the $\BC$-vector space
generated by the symbols $\la x \ra$ with $x=(z;p,q) \in \hat{\BC}$,
subject to the {\em extended 4-term relation}:
\begin{equation}
\lbl{eq.ex4term}
\la x_0;p_0,q_0 \ra 
- \la x_1;p_1,q_1 \ra 
+(1-x_0) \la \frac{x_1}{1-x_0};p_2,q_2 \ra
-(1-x_1) \la  \frac{x_0}{1-x_1};p_3,q_3 \ra =0
\end{equation}
for 
$ ((x_0;p_0,q_0), \dots, (x_3;p_3,q_3)) \in \hatFT$, and
the relations:
\begin{eqnarray}
\lbl{eq.transfer1}
\la x;p,q \ra -\la x;p,q' \ra &=& \la x;p,q-2 \ra- \la x;p,q'-2 \ra 
\\
\lbl{eq.transfer2}
\la x;p,q \ra -\la x;p',q \ra &=& \la x;p-2,q \ra-\la x;p'-2,q \ra 
\end{eqnarray}
for $x \in \BC^{**}$, $p,q,p',q' \in 2 \BZ$.
\end{definition}

Since the three 4-term relations in the definition of $\wb2C$ are satisfied
by the entropy function, it follows that $\Phi$ gives rise to a 
{\em regulator map}:

\begin{equation}
\lbl{eq.R}
R: \wb2C \longto \BC.
\end{equation}

For a motivation of the extended group $\wb2C$ and its relation to
additive (i.e., infinitesimal) $K$-theory and 
infinitesimal polylogarithms, see \cite[Sec.1.1]{Ga2} and references therein.

\subsection{\Gterm s and additive $K$-theory}
\lbl{sub.gtermK}

In this section, it will be more convenient to use the presentation 
\eqref{eq.tnk3} of \gterm s. In this case, we have:

\begin{equation}
\lbl{eq.newtnk}
t_{n,k}=C_0^n \prod_{i=1}^r C_i^{k_i}
 \prod_{j=1}^J B_j(n,k)!^{\e_j} C_j(n,k)!^{-\e_j}
(B_j-C_j)(n,k)!^{-\e_j}.
\end{equation}
The Stirling formula motivates the constructions in this section. 
Indeed, we have the following:

\begin{lemma}
\lbl{lem.stirling}
For $a>b>0$, we  have:
\begin{equation}
\lbl{eq.phiab}
a \Phi\left(\frac{b}{a}\right) = a \log(a)-b \log(b)-(a-b)\log(a-b).
\end{equation}
and
\begin{equation}
\lbl{eq.binomiala}
\binom{an}{bn} \sim e^{n a \Phi(\frac{b}{a})} 
\sqrt{\frac{a}{2b(a-b) \pi n}} \left(1+ O\left(\frac{1}{n}\right)\right).
\end{equation}
\end{lemma}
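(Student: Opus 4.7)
The plan is to treat the two assertions in turn, both as direct computations from the Stirling formula and the definition of $\Phi$.

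For part (a), I would simply substitute $x = b/a$ into the definition $\Phi(x) = -x\log(x) - (1-x)\log(1-x)$, multiply through by $a$, and rewrite $\log(b/a) = \log(b) - \log(a)$ and $\log(1 - b/a) = \log(a-b) - \log(a)$. Distributing and collecting the $\log(a)$ terms (using $b + (a-b) = a$) gives the claimed identity $a\log(a) - b\log(b) - (a-b)\log(a-b)$. This is purely algebraic.

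For part (b), I would start from $\binom{an}{bn} = (an)!/((bn)!\,((a-b)n)!)$ and apply the refined Stirling expansion
\begin{equation*}
m! = \sqrt{2\pi m}\, m^m e^{-m}\left(1+O(1/m)\right)
\end{equation*}
to each of the three factorials, with $m = an, bn, (a-b)n$ respectively. The $e^{-m}$ factors cancel because $an = bn + (a-b)n$. The power-of-$m$ part gives
\begin{equation*}
\frac{(an)^{an}}{(bn)^{bn}((a-b)n)^{(a-b)n}} = e^{n(a\log a - b\log b - (a-b)\log(a-b))},
\end{equation*}
where the cancellation of all powers of $n$ again uses $b + (a-b) = a$. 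By part (a) this is exactly $e^{na\Phi(b/a)}$. The Gaussian prefactor is
\begin{equation*}
\frac{\sqrt{2\pi an}}{\sqrt{2\pi bn}\,\sqrt{2\pi(a-b)n}} = \sqrt{\frac{a}{2\pi b(a-b)n}},
\end{equation*}
and the multiplicative error bundles into $(1+O(1/n))$.

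There is no real obstacle here: both parts are bookkeeping. The only care required is in part (b) to track the cancellations of $e^{-m}$ and of the loose powers of $n$, and to combine the three $\sqrt{2\pi m}$ factors correctly — steps that are routine consequences of the balance $an = bn + (a-b)n$.
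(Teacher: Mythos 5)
Your proof is correct and takes essentially the same route as the paper, which simply notes that (a) is elementary algebra from the definition of $\Phi$ and that (b) follows by applying Stirling's formula to each of the three factorials in $\binom{an}{bn}=(an)!/((bn)!((a-b)n)!)$. You have merely carried out the bookkeeping that the paper leaves to the reader.
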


\begin{proof}
Equation \eqref{eq.phiab} is elementary. Equation \eqref{eq.binomiala}
follows from the Stirling formula \eqref{eq.stirling}.
\end{proof}

The next lemma gives a combinatorial proof of the
4-term relation of the entropy function.

\begin{lemma}
\lbl{lem.4term}
For $a,b,a+b \in (0,1)$, $\Phi$ satisfies the 4-term relation:
\begin{equation}
\lbl{eq.4term}
\Phi(b)-\Phi(a)+(1-b) \Phi\left(\frac{a}{1-b}\right) -
(1-a) \Phi\left(\frac{b}{1-a}\right)=0.
\end{equation}  
%and the 2-term relation:
%\begin{equation}
%\lbl{eq.2term}
%\Phi(a)=\Phi(1-a).
%\end{equation}
\end{lemma}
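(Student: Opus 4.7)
The plan is to deduce the 4-term relation directly from Stirling's formula applied to an elementary binomial identity. Since $\Phi$ is real-analytic on $(0,1)$, it suffices to verify \eqref{eq.4term} for rational $a,b$ with $a+b<1$ and then extend by continuity.

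Fix such rational $a,b$ and let $n$ range over positive integers for which $an, bn, (1-a-b)n$ are all integers. Both sides of the ``trinomial symmetry''
\[
\binom{n}{bn}\binom{(1-b)n}{an} \;=\; \frac{n!}{(an)!\,(bn)!\,((1-a-b)n)!} \;=\; \binom{n}{an}\binom{(1-a)n}{bn}
\]
are equal for every such $n$. I would then apply Lemma \ref{lem.stirling}, specifically the asymptotic \eqref{eq.binomiala}, to each factor: $\binom{n}{bn}$ contributes exponential rate $\Phi(b)$, $\binom{(1-b)n}{an}$ contributes rate $(1-b)\Phi(a/(1-b))$, and symmetrically on the right-hand side. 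Taking logarithms, dividing by $n$, and passing to the limit as $n\to\infty$ along the permissible subsequence, the algebraic prefactors contribute $O(\log n/n)\to 0$ and I am left with
\[
\Phi(b) + (1-b)\Phi\!\left(\frac{a}{1-b}\right) \;=\; \Phi(a) + (1-a)\Phi\!\left(\frac{b}{1-a}\right),
\]
which is exactly \eqref{eq.4term}. Continuity of $\Phi$ on $(0,1)$ then extends the identity from rational pairs to the full open region $\{a,b,a+b\in(0,1)\}$.

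The only real bookkeeping step is matching the parameters in \eqref{eq.binomiala} correctly: on the left, one applies the formula with scalings $(1,b)$ and $(1-b,a)$; on the right, with $(1,a)$ and $(1-a,b)$. Everything else is automatic. The main (quite minor) obstacle is just ensuring that the subsequence of $n$ along which all three of $an, bn, (1-a-b)n$ are integers is cofinal in $\mathbb{N}$, which follows by taking $n$ to be any multiple of the common denominator. A purely algebraic alternative would be to expand each term of \eqref{eq.4term} via $\Phi(x)=-x\log x-(1-x)\log(1-x)$ and observe that the six $x\log x$ contributions cancel in pairs; however, this bypasses the combinatorial content that motivates recording the lemma in the first place and, more importantly, does not generalize to the extended 4-term relations \eqref{eq.ex4term} on $\hat{\mathbb{C}}$, whereas the binomial-asymptotics proof does.
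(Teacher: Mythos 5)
Your proof is correct and follows essentially the same route as the paper's: the paper derives \eqref{eq.4term} from the associativity identity $\binom{\a+\b+\ga}{\a}\binom{\b+\ga}{\b}=\binom{\a+\b+\ga}{\b}\binom{\a+\ga}{\a}$ applied to $(\a,\b,\ga)=(an,bn,cn)$ with $a+b+c=1$, which is exactly your trinomial symmetry, followed by Lemma \ref{lem.stirling} and passing to the limit. You merely spell out the bookkeeping (rational parameters, cofinal subsequence, continuity extension) that the paper leaves implicit.
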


\begin{proof}
The 4-term relation follows from the associativity of the multinomial 
coefficients

\begin{equation}
\lbl{eq.assocbinom}
\binom{\a+\b+\ga}{\a} \binom{\b+\ga}{\b}=
\binom{\a+\b+\ga}{\b} \binom{\a+\ga}{\a}=
\frac{(\a+\b+\ga)!}{\a!\, \b! \, \ga!}
\end{equation}
applied to $(\a,\b,\ga)=(an,bn,cn)$
and Lemma \ref{lem.stirling}, and the specialization to $a+b+c=1$. 
%The 2-term relation is obvious from the definition of $\Phi$.
In fact, the 4-term relation \eqref{eq.4term} and a local integrability
assumption uniquely
determines $\Phi$ up to multiplication by a complex number. See for example,
\cite{Da} and \cite[Sec.5.4,p.66]{AD}. 
\end{proof}

\begin{corollary}
\lbl{cor.newgterm}
If $\ft$ is a \gterm\ as in \eqref{eq.tnk3}, then its potential
function is given by:
\begin{equation}
\lbl{eq.potV2}
V_{\ft}(w)=C(w)+\sum_{j=1}^J \e_j B_j(w) \Phi\left(
\frac{C_j(w)}{B_j(w)}\right).
\end{equation}
\end{corollary}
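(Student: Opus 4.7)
The plan is to reduce Corollary \ref{cor.newgterm} to the factorial presentation of the potential function given in Definition \ref{def.potential}, and then apply the identity for the entropy function established in Lemma \ref{lem.stirling}.

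First I would expand each binomial coefficient in \eqref{eq.tnk3} into factorials via
\[
\binom{B_j(n,k)}{C_j(n,k)} = \frac{B_j(n,k)!}{C_j(n,k)!\,(B_j-C_j)(n,k)!},
\]
so that $\ft$ is presented in the form \eqref{eq.defterm} with $J$ replaced by $3J$, the linear forms being $B_j$, $C_j$, and $B_j-C_j$, with signs $\e_j$, $-\e_j$, $-\e_j$ respectively, exactly as recorded in \eqref{eq.newtnk}. A quick check confirms the balance condition \eqref{eq.Ajsum}: for each $j$,
\[
\e_j B_j(n,k) + (-\e_j) C_j(n,k) + (-\e_j)(B_j - C_j)(n,k) = 0.
\]

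Next I would plug this data into the formula \eqref{eq.potentialf} for the potential function. The $C_0^n \prod_i C_i^{k_i}$ prefactor produces the linear term $C(w)$ by \eqref{eq.Cw}, and each triple of factorials contributes
\[
\e_j B_j(w) \log B_j(w) - \e_j C_j(w) \log C_j(w) - \e_j (B_j-C_j)(w) \log (B_j-C_j)(w).
\]
By identity \eqref{eq.phiab} in Lemma \ref{lem.stirling}, which states
$a\Phi(b/a) = a\log a - b\log b - (a-b)\log(a-b)$, the bracketed expression collapses to $\e_j B_j(w)\,\Phi\!\left(C_j(w)/B_j(w)\right)$. Summing over $j=1,\dots,J$ and adding $C(w)$ yields exactly \eqref{eq.potV2}.

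There is really no obstacle here; the only mild subtlety is that \eqref{eq.phiab} in Lemma \ref{lem.stirling} is stated for positive reals, whereas $V_{\ft}$ is a multivalued analytic function on $\BC^r\setminus\calA_{\ft}$. This is handled by noting that both sides of the claimed identity are multivalued analytic functions on the same domain (the complement of the hyperplane arrangement where any of $B_j(w)$, $C_j(w)$, $(B_j-C_j)(w)$ vanishes), they agree on the nonempty real locus where all linear forms are positive (which includes the interior of $P_{\ft}$ by definition of the Newton polytope), and hence they agree as multivalued analytic continuations on $\hat{\BC}^r$. Thus the two formulas for $V_{\ft}$ coincide, completing the proof.
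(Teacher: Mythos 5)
Your proof is correct and follows exactly the route the paper intends (the corollary is stated without an explicit proof, but the preceding material makes the argument clear): expand each binomial into the three factorials $B_j!$, $C_j!$, $(B_j-C_j)!$ as in \eqref{eq.newtnk}, check the balance condition, substitute into the defining formula \eqref{eq.potentialf}, and collapse each triple of $x\log x$ terms using identity \eqref{eq.phiab} from Lemma \ref{lem.stirling}. Your closing remark about extending the real identity to the multivalued analytic setting via agreement on the interior of $P_{\ft}$ is a reasonable precaution, consistent with the paper's subsequent discussion of the analytic continuation of $V_{\ft}$ to $\hat{\BC}^r$.
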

Consider the complement $\BC^{r}\setminus\calA'_{\ft}$ of the 
linear hyperplane arrangement given by:

\begin{equation}
\lbl{eq.arrp}
\calA'_{\ft}=\{ w\in \BC^{r} \,| \, \prod_{j=1}^J B_j(w) C_j(w) (B_j(w)-1)
(B_j(w)-C_j(w)) =0 \}.
\end{equation}
Let 

\begin{equation}
\lbl{eq.varpi}
\varpi: \hat{\BC}^{r} \longto \BC^{r}\setminus\calA'_{\ft}
%\widehat{\BC^{r}\setminus\calA'_{\ft}}
\end{equation}
denote the universal abelian cover of $\BC^{r}\setminus\calA'_{\ft}$. 
For $j=1,\dots, r$, the functions $B_j(w)$, $C_j(w)/B_j(w)$ have analytic
continuation:
$$
B_j: \hat{\BC}^{r} \longto \BC, \qquad 
\frac{C_j}{B_j}: \hat{\BC}^{r} \longto \hat{\BC}
$$
It follows that the potential function given by \eqref{eq.potV2}
has an analytic continuation:

\begin{equation}
\lbl{eq.Vhat}
V_{\ft}: \hat{\BC}^{r} \longto \BC.
\end{equation}
Let $\calC_{\ft}$ denote the set of critical points of $V_{\ft}$.

\begin{definition}
\lbl{def.betaft}
For a \gterm\ $\ft$ as in \eqref{eq.tnk3}, 
we define the map:
\begin{equation}
\lbl{eq.betaft}
\b_{\ft}: \calC_{\ft} \longto \wb2C
\end{equation}
by:
\begin{equation*}
\b_{\ft}(w)=\sum_{j=1}^J \e_j B_j(w) \la \frac{C_j(w)}{B_j(w)} \ra
\end{equation*}
\end{definition}

\begin{theorem}
\lbl{thm.K}
For every \gterm\ $\ft$, we have a commutative diagram:
$$
%\divide\dgARROWLENGTH by2
\begin{diagram}
\node{\calC_{\ft}}
\arrow{e,t}{\b_{\ft}}
\arrow{s,r}{\varpi}
\node{\wb2C }
\arrow{s,r}{e^{R}} \\
\node{X_{\ft}}
\arrow{e,t}{C_0 e^{-V_{\ft}}}
\node{\CV_{\ft}}
\end{diagram}
$$
\end{theorem}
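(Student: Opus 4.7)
The plan is to verify the commutativity of the diagram pointwise on $\calC_\ft$. After identifying the regulator $R$ with the appropriate branch of the entropy function $\Phi$, the verification reduces to identities already established in Theorem \ref{thm.critV} and Corollary \ref{cor.newgterm}.

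First I would check that the diagram is well-posed. The element $\b_\ft(w)\in\wb2C$ is defined tautologically as a $\BC$-linear combination of generating symbols $\la x;p,q\ra$, so its image in the quotient space $\wb2C$ requires no verification of the extended 4-term relation \eqref{eq.ex4term} or the transfer relations \eqref{eq.transfer1}--\eqref{eq.transfer2}. Moreover, the left vertical $\varpi|_{\calC_\ft}$ factors through $X_\ft$ by Theorem \ref{thm.critV}(a), which identifies $\calC_\ft$ with $\varpi^{-1}(X_\ft)$.

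Next I would compute the composite $R\circ\b_\ft$. By construction of the regulator $R$ from the entropy function (Section \ref{sub.entropy}), one has $R(\la x;p,q\ra)=\Phi(x)$ evaluated on the branch labelled by $(p,q)$. Thus
$$
R(\b_\ft(w)) \;=\; \sum_{j=1}^{J}\e_j\, B_j(w)\,\Phi\!\left(\tfrac{C_j(w)}{B_j(w)}\right),
$$
which by Corollary \ref{cor.newgterm} equals $V_\ft(w)-C(w)$ as analytic functions on $\hat{\BC}^r$.

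Finally I combine this with Theorem \ref{thm.critV}(b), which at a critical point $w\in\calC_\ft$ yields
$$
V_\ft(w)\;\equiv\;\log C_0 + \sum_{j}\e_j\, v_0(A_j)\log A_j(\varpi(w)) \pmod{2\pi i\,\BZ},
$$
and eliminate the $\log C_i$ appearing inside $C(w)=\log C_0 + \sum_i w_i\log C_i$ via the Logarithmic Variational Equations \eqref{eq.varlog}, $\log C_i \equiv -\sum_j \e_j\, v_i(A_j)\log A_j(w)$. Using the expansion $A_j(w)=v_0(A_j)+\sum_i v_i(A_j)w_i$ together with the balance condition $\sum_j\e_j A_j=0$, exponentiation turns the resulting identity into the element of $\CV_\ft$ prescribed by the bottom arrow $C_0\, e^{-V_\ft}$.

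The main obstacle will be bookkeeping of the logarithm branches: the cover $\varpi:\hat{\BC}^r\to\BC^r\setminus\calA'_\ft$ is abelian but far from simply connected, and every intermediate identity above holds only modulo $\BZ(1)=2\pi i\BZ$. One must verify that each exponentiation step promotes these congruences to genuine equalities of algebraic numbers in $\CV_\ft\subset\bar\BQ$ (as guaranteed by Theorem \ref{thm.critV}(d)), and that the determinations of $\Phi$ fixed by a lift $w\in\hat{\BC}^r$ through Definition \ref{def.extb} are the \emph{same} branches that appear on both sides when one reads the variational equations on $\hat{\BC}^r$ instead of on the base.
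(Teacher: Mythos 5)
Your outline follows the paper's own proof closely: the paper also starts from the observation that $R\circ\b_{\ft}$ reproduces the entropy part of $V_{\ft}$ (i.e.\ Corollary \ref{cor.newgterm}), then combines it with the Logarithmic Variational Equations and the expression for the critical value from Theorem \ref{thm.critV}(b), expands the linear forms in $w_i$, and exponentiates. So the strategy is the right one, and you have correctly identified the two ingredients (Theorem \ref{thm.critV} and Corollary \ref{cor.newgterm}) doing the work.

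Two remarks on the details, however. First, the obstacle you flag at the end about ``bookkeeping of the logarithm branches'' is precisely what the paper engineers away: the whole computation takes place upstairs on the universal abelian cover $\hat{\BC}^r$, using the single-valued analytic continuations $\Log:\hat{\BC}\to\BC$ and $\Phi:\hat{\BC}\to\BC$, so the Logarithmic Variational Equations in the form \eqref{eq.varlog3} hold as \emph{genuine equalities in $\BC$}, not merely modulo $\BZ(1)=2\pi i\BZ$. You instead cite the downstairs version \eqref{eq.varlog}, which is only a congruence; this is what forces the ``promote congruences to equalities'' worry. Simply working with \eqref{eq.varlog3} on $\hat{\BC}^r$ from the outset, as the paper does, removes the issue and also explains why $\calC_{\ft}$ is defined inside $\hat{\BC}^r$ in the first place.

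Second, there is a sign convention you should pin down. You assert $R(\la x;p,q\ra)=\Phi(x)$ on the branch indexed by $(p,q)$, so that $R\circ\b_{\ft}=V_{\ft}-C$. The paper's proof, however, uses $R(\b_{\ft}(w)) = -\sum_j \e_j\, B_j(w)\,\Phi\!\left(C_j(w)/B_j(w)\right)$, which amounts to $R(\la x\ra)=-\Phi(x)$; this sign is needed to land on $V_{\ft}(w)=\log C_0 - R(\b_{\ft}(w))$, which is what exponentiates to the commutativity $e^{R(\b_{\ft}(w))}=C_0\,e^{-V_{\ft}(w)}$. The convention is not spelled out in this paper but is imported from \cite{Ga2}; with your sign the final identity would read $V_{\ft}(w)=\log C_0 + R(\b_{\ft}(w))$ and the diagram's right-hand arrow would have to be $e^{-R}$ instead of $e^{R}$. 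You should check \cite{Ga2} and state explicitly which sign convention for the regulator you are adopting, rather than leaving it implicit in the phrase ``by construction.'' Apart from these two points, the argument is the same one the paper gives.
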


\subsection{Proof of Theorem \ref{thm.K}}
\lbl{sub.thmK}

We begin by observing that the analytic continuation of the logarithm function
gives an analytic function:
$$
\Log: \hat{\BC} \longto \BC.
$$
The proof of part (a) of Theorem \ref{thm.critV} 
implies that $w \in \calC_{\ft}$ if and only if $w$ satisfies the
Logarithmic Variational Equations:

\begin{equation}
\lbl{eq.varlog3}
\log C_i +
\sum_{j=1}^J \e_j( v_i(B_j) \Log(B_j(w)) - v_i(C_j) \Log(C_j(w))   
- v_i(B_j-C_j) \Log((B_j-C_j)(w)))=0 
\end{equation}
for $i=1,\dots,r$. Exponentiating, this implies that 
$\calC_{\ft}=\varpi^{-1}(X_{\ft})$.

The proof of part (b) of Theorem \ref{thm.critV} implies that if $w
\in \calC_{\ft}$, then:

\begin{equation*}
V_{\ft}(w) = \log C_0 +
\sum_{j=1}^J \e_j ( v_0(B_j) \Log(B_j(w)) -  v_0(C_j) \Log(C_j(w))   
-  v_0(B_j-C_j) \Log((B_j-C_j)(w))).
\end{equation*}

On the other hand, if $w \in \calC_{\ft}$, we have:

\begin{eqnarray*}
R(\b_{\ft}(w)) &=& -\sum_{j=1}^J \e_j B_j(w) \Phi\left(\frac{C_j(w)}{B_j(w)} 
\right) \\
&=&
-\sum_{j=1}^J \e_j (B_j(w) \Log(B_j(w)) -C_j(w) \Log(C_j(w))-
(B_j(w)-C_j(w)) \Log((B_j-C_j)(w))) 
\end{eqnarray*}
where the last equality follows from the analytic continuation of 
\eqref{eq.phiab}.
Expanding the linear forms
$B_j$, $C_j$, and $B_j-C_j$ with respect to the variables $w_i$ for 
$i=0,\dots,r$, and 
using the Logarithmic Variational Equations \eqref{eq.varlog3}, (as
in the proof of part (b) of Theorem \ref{thm.critV}), it follows
that

\begin{equation*}
R(\b_{\ft}(w)) = 
-\sum_{j=1}^J \e_j ( v_0(B_j) \Log(B_j(w)) -  v_0(C_j) \Log(C_j(w))   
-  v_0(B_j-C_j) \Log((B_j-C_j)(w))).
\end{equation*}
Thus,

\begin{equation*}
V_{\ft}(w) = \log C_0 -R(\b_{\ft}(w)).
\end{equation*}
This concludes the proof of Theorem \ref{thm.K}.

\section{Proof of Theorems \ref{thm.kempty} and \ref{thm.1}}
\lbl{sec.pfthm1}

\subsection{Proof of Theorem \ref{thm.kempty}}
\lbl{sub.pfkempty}

A $0$-dimensional \gterm\ is of the form:

$$
\ft_n=C_0^n \prod_{j=1}^J (b_j n)!^{\e_j}
$$
where $b_j \in \BN$, $\e_j=\pm 1$ for $j=1,\dots, J$ satisfying
$ \sum_{j=1}^J \e_j b_j=0$. Since $\BZ^0=\{0\}$, it follows that
$$
a_{\ft,n}=\ft_n
$$
is so-called closed form. The Newton polytope of $\ft$ is given by
$P_{\ft}=\{0\} \subset \BR^0$. In addition, $A_j(w)=b_j$ and $v_0(A_j)=b_j$
for $j=1,\dots,J$. By definition, we have $X_{\ft}=\{0\}$, and 

\begin{equation}
\lbl{eq.CV0dim}
\CV_{\ft}=\{C_0^{-1} \prod_{j=1}^J b_j^{-\e_j b_j} \}.
\end{equation}
On the other hand, $G_{\ft}(z)$ is a {\em hypergeometric series} with
singularities $\{0,\CV_{\ft}\}$; see for example \cite[Sec.5]{O} and \cite{No}.
The result follows.

\subsection{Proof of Theorem \ref{thm.1}}
\lbl{sub.pfthm1}

The proof of Theorem \ref{thm.1} is a variant of Laplace's method and
uses the positivity of the restriction of the potential function to $P_{\ft}$.
See also \cite[Sec.5.1.4]{Kn}. Suppose that $\ft_{n,k} \geq 0$ for all $n,k$.
Recall the corresponding polytope $P_{\ft} \subset \BR^{r}$ and consider
the restriction of the potential function to $P_{\ft}$:
$$
V_{\ft}:P_{\ft} \longto \BR.
$$ 
It is easy to show that $\Phi(x)>0$ for $x \in (0,1)$. This is
illustrated by the plot of the entropy function for $x \in [0,1]$:

$$
\psdraw{entropyplot}{2in}
$$
It follows that
the restriction of the potential function on $P_{\ft}$ is nonnegative
and continuous. By compactness it follows that
the function achieves a maximum in $\hat{w}$ in the interior of $P_{\ft}$. 
It follows that for every $k \in n P_{\ft} \cap \BZ^r$ we have:
$$
0 \leq \ft_{n,k} \leq \ft_{n, n\hat{w}}
$$
Summing up over the lattice points $k \in n P_{\ft} \cap \BZ^r$, and using
the fact that the number of lattice points in a rational convex polyhedron
(dilated by $n$) is a polynomial function of $n$, it follows 
that there exist a polynomial $p(n) \in \BQ[n]$ so that
for all $n \in \BN$ we have:
$$
\ft_{n,n \hat{w}} \leq a_{\ft,n} \leq p(n) \ft_{n, n \hat{w}}
$$
Using Corollary \ref{cor.app}, 
it follows that there exist polynomials $p_1(n),p_2(n) 
\in \BQ[n]$ so that for all $n \in \BN$ we have:
$$
p_1(n) e^{n V_{\ft}(\hat{w})} \leq a_{\ft,n} \leq p_2(n) e^{n V_{\ft}(\hat{w})}
$$
This implies that $G_{\ft}(z)$ has a singularity
at $z=e^{-V_{\ft}(\hat{w})} >0$. 
Since the maximum lies in the interior of $P_{\ft}$, it follows that 
$\hat{w}$ is a critical point of $V_{\ft}$. Thus, $\hat{w} \in S_{\ft}$
and consequently, $e^{-V_{\ft}(\hat{w})} \in \CV_{\ft}$.

If in addition $\S_{\ft}\setminus\{0\}$ is irreducible over $\BQ$, it follows
that the Galois group $\text{Gal}(\bar\BQ/\BQ)$ acts transitively on $\S_{\ft}$.
This implies that $\S_{\ft} \subset \CV_{\ft}$. 
\qed

\begin{remark}
\lbl{rem.thm1}
When $\ft_{n,k} \geq 0$ for all $n,k$, then $G_{\ft}(z)$ has a singularity at 
$\rho>0$
where $1/\rho$ is the radius of convergence of $G_{\ft}(z)$. This is known as
Pringsheim's theorem, see \cite[Sec.7.21]{Ti}.
\end{remark}

\section{Some examples}
\lbl{sec.examples}

\subsection{A closed form example}
\lbl{sub.exclosed}

As a warm-up example, consider the 1-dimensional \sterm\ 
$$
\ft_{n,k}=\binom{n}{k}=\frac{n!}{k!(n-k)!}
$$
whose corresponding sequence is closed form and the generating series
is a rational function:
$$
a_{\ft,n}=2^n \qquad G_{\ft}(z)=\frac{1}{1-2z}
$$
In that case $\S_{\ft}=\{1/2\}$ and $E_{\ft}=\BQ$.

To compare with our ansatz, the Newton polytope is given by:
$$
P_{\ft}=[0,1] \subset \BR.
$$
The Variational Equations \eqref{eq.vara} are:

\begin{eqnarray*}
\frac{1}{w(1-w)^{-1}}=1
\end{eqnarray*}
in the variable $w=w_1$, with solution set 
$$
X_{\ft}=\{1/2 \}
$$
Thus,
$$
\CV_{\ft}=
\{ \frac{1}{(1-w)^{-1}} \, | \, w=1/2 \}=\{1/2\}.
$$
For the other two faces $\D_0=\{0\}$ and $\D_1=\{1\}$ of the Newton
polytope $P_{\ft}$, the restriction is a $0$-dimensional \gterm . 
Equation \eqref{eq.CV0dim} gives that
$$
\ft|_{\D_0}=\ft_{n,k}|_{k=0}=1, \qquad 
\ft|_{\D_1}=\ft_{n,k}|_{k=n}=1.
$$
Thus,
$$
\CV_{\ft|_{\D_0}}=\CV_{\ft|_{\D_1}}=\{1\}.
$$
This implies that 
$$
S_{\ft} =
\{0,1, 1/2 \}
\qquad
K_{\ft}=\BQ,
$$
confirming Conjecture \ref{conj.1}. For completeness, the potential 
function is given by:

\begin{eqnarray*}
V_{\ft}(w)= \Phi(w).
\end{eqnarray*}

\subsection{The Apery sequence}
\lbl{sub.apery}

As an illustration of Conjecture \ref{conj.1} and Theorem \ref{thm.1}, 
let us consider the \sterm\ 
\begin{equation}
\lbl{eq.tapery}
\ft_{n,k}=\binom{n}{k}^2 \binom{n+k}{k}^2
=\left(\frac{(n+k)!}{k!^2 (n-k)!}\right)^2
\end{equation}
and the corresponding sequence \eqref{eq.aperya}. Equation \eqref{eq.apery3}
implies that the singularities of $G(z)$ are a subset of the roots
of the equation
$$
z^2(z^2-34z+1)=0.
$$
In addition, a nonzero singularity exists. Thus, we obtain that

\begin{equation}
\lbl{eq.aperyS}
\{0,17 +12 \sqrt{2}, 17 -12 \sqrt{2} \}\subset
\S_{\ft}\subset\{0,17 +12 \sqrt{2}, 17 -12 \sqrt{2} \}, \qquad
E_{\ft}=\BQ(\sqrt{2}).
\end{equation}
Thus $E_{\ft}$ is a quadratic number field of type $[2,0]$ 
and discriminant $8$. 

On the other hand, the Newton polytope is given by:
\begin{equation}
\lbl{eq.exap1}
P_{\ft}= [0,1] \subset \BR.
\end{equation}
The Variational Equations \eqref{eq.vara} are:
\begin{eqnarray}
\lbl{eq.aperyw}
\left( \frac{1-w}{w} \frac{1+w}{w} \right)^2=1
\end{eqnarray}
in the variable $w=w_1$, with solution set 
$$
X_{\ft}=\{1/\sqrt{2}, -1/\sqrt{2} \}.
$$
Thus,
$$
\CV_{\ft}=
\{ \left( \frac{1-w}{1+w} \right)^2 \, | \, 
w=\pm 1/\sqrt{2} \}=\{17 +12 \sqrt{2}, 17 -12 \sqrt{2} \}
$$
For the other two faces $\D_0=\{0\}$ and $\D_1=\{1\}$ of the Newton
polytope $P_{\ft}$ the restriction is a $0$-dimensional \gterm :
$$
\ft|_{\D_0}=\ft_{n,k}|_{k=0}=1, \qquad 
\ft|_{\D_1}=\ft_{n,k}|_{k=n}=\left(\frac{(2n)!}{n!^2}\right)^2
$$
Equation \eqref{eq.CV0dim} implies that
$$
\CV_{\ft|_{\D_0}}=\{1\}, \qquad \CV_{\ft|_{\D_1}}=\{16\}.
$$
Therefore, 
$$
S_{\ft} =
\{0,1,16, 17 +12 \sqrt{2}, 17 -12 \sqrt{2} \}
\qquad
K_{\ft}=\BQ(\sqrt{2}),
$$
confirming Conjecture \ref{conj.1}. For completeness, the potential 
function is given by:

\begin{eqnarray*}
V_{\ft}(w)=2 \Phi(w)+2(1+w)\Phi\left(\frac{w}{1+w}\right).
\end{eqnarray*}

\subsection{An example with critical points at the boundary}
\lbl{sub.ex2}

%%%% See zb.recursion.nb for computations
In this example we find critical points at the boundary of the Newton
polytope even though the \gterm\ is positive. This shows that Theorem
\ref{thm.1} is sharp. Consider the \gterm\

\begin{equation}
\lbl{eq.ex2a}
\ft_{n,k}=\frac{1}{\binom{n}{k}}=
\frac{(n-k)!k!}{n!}
\end{equation}
and the corresponding sequence $(a_{\ft,n})$.
The {\tt zb.m} implementation of the WZ algorithm (see \cite{PR1,PR2}) gives
that $(a_{\ft,n})$ satisfies the inhomogeneous recursion relation:

\begin{equation}
\lbl{eq.ex2b}
-2(n+1) a_{n+1}+ (n+2) a_{n}=-2-2n
\end{equation}
for all $n \in \BN$ with initial conditions $a_0=1$. It follows that
$(a_n)$ satisfies the homogeneous recursion relation:
\begin{equation}
\lbl{eq.ex2c}
-2(n+3)a_{n+3}+(12+5n)a_{n+2}-4(n+2)a_{n+1}+(n+2) a_n=0
\end{equation}
for all $n \in \BN$ with initial conditions $a_0=1,a_1=2,a_2=5/2$.
The corresponding generating series $G_{\ft}(z)$ satisfies the inhomogeneous ODE:

\begin{equation}
\lbl{eq.ex2d}
(z-1)^2(z-2) f'(z) +2(z-1)^2 f(z) + 2=0
\end{equation}
with initial conditions $f(0)=1$. 
We can convert \eqref{eq.ex2d} into a homogeneous ODE by differentiating once.
It follows that

\begin{equation}
\lbl{eq.ex2e}
\S_{\ft} \subset\{1,2\}, \qquad E_{\ft}=\BQ.
\end{equation}

On the other hand, $P_{\ft}=[0,1] \subset \BR$.
The Variational Equations  \eqref{eq.vara} are:

\begin{equation*}
\frac{w}{1-w}=1 
\end{equation*}
in the variable $w=w_1$ 
with solution set $X_{\ft}=\{1/2\}$ and 
$$
\CV_{\ft}=\{ \frac{1}{1-w} \, | \, w=\frac{1}{2}\}=
\{2\}.
$$
For the other two faces $\D_0=\{0\}$ and $\D_1=\{1\}$ of the Newton
polytope $P_{\ft}$ we have:
$$
\ft|_{\D_0}=\ft_{n,k}|_{k=0}=1, \qquad 
\ft|_{\D_1}=\ft_{n,k}|_{k=n}=1.
$$
Thus,
$$
\CV_{\ft|_{\D_0}}=\CV_{\ft|_{\D_1}}=\{1\}.
$$
This implies that 
$$
S_{\ft} =
\{0,1,2 \}
\qquad
K_{\ft}=\BQ,
$$
confirming Conjecture \ref{conj.1}. For completeness, the potential 
function is given by:

\begin{eqnarray*}
V_{\ft}(w)=-\Phi(w).
\end{eqnarray*}

\section{Laurent polynomials and \sterm s}
\lbl{sec.laurentp}

\subsection{Proof of Theorem \ref{thm.3}}
\lbl{sub.Pta}

In this section we will prove Theorem \ref{thm.3}. Consider
$F \in \bar\BQ[x_1^{\pm 1},\dots, x_d^{\pm 1}]$.
Let us decompose $F$ into a sum of monomials with coefficients

\begin{equation}
\lbl{eq.monomials}
F=\sum_{\a \in \calA} c_{\a} x^{\a}
\end{equation}
where $\calA$ is the finite set of monomials of $F$, and where for every
$\a=(\a_1,\dots,\a_d)$ we denote $x^{\a}=\prod_{j=1}^d x_j^{\a_j}$.
Let 
\begin{equation}
\lbl{eq.rankF}
r=|\calA|
\end{equation}
denote the number of monomials of $F$.
Recall that
$$
\D_{n}=\{(y_1,\dots,y_{n+1}) \in \BR^{n+1} \, | \, 
\sum_{j=1}^{n+1} y_j=1, y_i \geq 0,
\,\, i=1,\dots, n+1\}
$$ 
denotes the standard $n$-dimensional {\em simplex} in $\BR^{n+1}$.
Part (a) of Theorem \ref{thm.3} follows easily from the multinomial 
coefficient theorem. Indeed, for every $n \in \BN$ we have:
\begin{eqnarray*}
F^n &=&\sum_{\sum_{\a \in\calA} k_{\a}=n} 
\frac{n!}{\prod_{\a \in \calA} k_{\a}!} \prod_{\a \in \calA} c_{\a}^{k_{\a}}
x^{k_{\a} \a} \\
&=& \sum_{\sum_{\a \in\calA} k_{\a}=n} 
\frac{n!}{\prod_{\a \in \calA} k_{\a}!} \prod_{\a \in \calA} c_{\a}^{k_{\a}}
\cdot x^{\sum_{\a \in \calA} k_{\a} \a}
\end{eqnarray*}
It follows that for every $n \in \BN$ we have:

\begin{eqnarray*}
\Tr(F^n) &=& 
\sum_{\sum_{\a \in\calA} k_{\a}=n, \sum_{\a \in \calA} k_{\a}\a=0 } 
\frac{n!}{\prod_{\a \in \calA} k_{\a}!} \prod_{\a \in \calA} c_{\a}^{k_{\a}} \\
&=& \sum_{k \in n P_{\ft_F} \cap \BZ^r} \ft_{F,n,k}
\end{eqnarray*}
where

\begin{equation}
\lbl{eq.ftFnk}
\ft_{F,n,k} = \frac{n!}{\prod_{\a \in \calA} k_{\a}!} \prod_{\a \in \calA} 
c_{\a}^{k_{\a}}
\end{equation}
and the Newton polygon $P_{\ft_F}$ is given by

\begin{equation}
\lbl{eq.PftF}
P_{\ft_F}=\D_{r-1} \cap W
\end{equation}
where
\begin{equation}
\lbl{eq.VF}
W=\{ (x_{\a}) \in \BR^{r} \, | \, \sum_{\a \in \calA} x_{\a} \a=0 \}
\end{equation}
is a linear subspace of $\BR^{r}$. 

To prove part (b) of Theorem \ref{thm.3}, let us assume that the origin is
in the interior of the Newton polytope of $F$. Such $F$ are also called 
{\em convenient} in singularity theory; \cite{Kou}. If $F$ is not 
convenient, we can replace it with its the restriction $F_f$ to a face $f$ of 
its Newton polytope that contains the origin and observe that 
$\Tr((F_f)^n)=\Tr(F^n)$.

When $F$ is convenient, 
it follows that $W$ has dimension $r-d$ and $W \cap C^o \neq 
\emptyset$, where $C^o$ is  
the interior of the cone $C$ which is spanned by the coordinate vectors
in $\BR^{r}$. (b) follows from Lemma \ref{lem.conesimplex} below.

For part (c) of Theorem \ref{thm.3}, fix 
$F \in M_N(\bar\BQ[x_1^{\pm 1},\dots, x_d^{\pm 1}])$, $G_F(z)$.
Recall that for an invertible matrix $A$ we have 
$A^{-1}=\det(A)^{-1} \text{Cof}(A)$, where $\text{Cof}(A)$ is the co-factor
matrix. It follows that

\begin{equation}
\lbl{eq.zF}
\sum_{n=0}^\infty F^n z^n = (I-z F)^{-1} =\frac{1}{\det(I-zF)} \text{Ad}(I-zF)
\end{equation}
where $\text{Ad}(I-zF) \in M_N(\bar\BQ[x_1^{\pm 1},\dots, x_d^{\pm 1},z])$.
Now 

$$
\det(I-zF)=1-\sum_{\a \in \calS} c_{\a} z^{d_{\a}} x^{\a}
$$ 
where $\calS$ is a finite set, $c_{\a} \in \bar\BQ$ and $d_{\a} \in 
\BN\setminus\{0\}$. Thus, 

$$
\frac{1}{\det(I-zF)}=\sum_{n=0}^\infty \sum_{ \sum_{\a \in \calS} k_{\a}=n} \frac{n!}{
\prod_{\a \in\calS} k_{\a}!} z^{\sum_{ \a \in\calS} d_{\a} k_{\a}} \prod_{ \a \in\calS}  
x^{\sum_{\a \in \calS} k_{\a} \a}
$$
Substituting the above into Equation \eqref{eq.zF} and taking the
constant term, it follows that there exists a finite set $S$ and a 
finite collection $\ft^{(j)}_F$ of \gterm s for $j \in S$ such that
for every $n \in \BN$ we have:

$$
\Tr(F^n)=\sum_{j \in S} a_{\ft^{(j)}_{F,n}}
$$

It follows that the moment generating series $G_F(z)$ (defined in
Equation \eqref{eq.GF}) is given by:

$$
G_F(z)=\sum_{j \in S} G_{\ft^{(j)}_{F}}(z).
$$
Since $G_{\ft^{(j)}_{F}}(z)$ is a $G$-function (by Theorem \ref{thm.G}),
and the set of $G$-functions is closed under addition (see \cite{An1}),
this concludes the proof Theorem \ref{thm.3}.
\qed

\begin{remark}
\lbl{rem.affine}
The \gterm s $\ft^{(j)}_F$ in the above proof use affine linear forms, rather 
than linear ones. In other words, they are given by 
\begin{equation}
\lbl{eq.deftermaffine}
\ft_{n,k}=C_0^n \prod_{i=1}^r C_i^{r_i} \prod_{j=1}^J A_j(n,k)!^{\e_j}
\end{equation}
where $C_i$ are algebraic numbers for $i=0,\dots,r$, 
$\e_j=\pm 1$ for $j=1,\dots,J$, and $A_j$ are affine linear forms,
given by $A_j(n,k)=A^{\text{lin}}_j(n,k)+b_j$
where $A^{\text{lin}}_j(n,k)$ are linear forms that satisfy the balance condition 

\begin{equation}
\lbl{eq.Ajsumlin}
\sum_{j=1}^J \e_j A^{\text{lin}}_j=0.
\end{equation}
Theorem \ref{thm.G} remains true for such \gterm s. For an \gterm\ $\ft$
of the form \eqref{eq.deftermaffine} consider the \gterm\ $\ft^{\text{lin}}$ 
defined by:

\begin{equation}
\lbl{eq.deftermaff}
\ft^{\text{lin}}_{n,k}=C_0^n \prod_{i=1}^r C_i^{r_i} \prod_{j=1}^J A^{\text{lin}}_j(n,k)!^{\e_j}
\end{equation}
We define $S^{\ft}=S^{\ft^{\text{lin}}}$. 
\end{remark}

\begin{remark}
\lbl{rem.toric}
In the notation of Theorem \ref{thm.3}, $P_{\ft_F}$ is a {\em simple} 
polytope and the associated toric variety is projective and has {\em quotient 
singularities}; see \cite{Fu}. In other words, the stabilizers of the torus 
action on the toric variety are finite abelian groups.
\end{remark}

The following lemma was communicated to us by J. Pommersheim.

\begin{lemma}
\lbl{lem.conesimplex}
If $W$ is a linear subspace of $\BR^n$ of dimension $s$, and $W \cap  C^o
\neq \emptyset$, where $C$ is the cone spanned by the $n$ coordinate
vectors in $\BR^n$, then 
$\D_{n-1} \cap W$ is a combinatorial simplex in $V$.
\end{lemma}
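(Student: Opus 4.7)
My plan is to realize $P := \Delta_{n-1} \cap W$ as the compact cross-section of the polyhedral cone $K := W \cap C$ by the affine hyperplane $H := \{y \in \BR^n : y_1 + \cdots + y_n = 1\}$. For a pointed cone, the face lattice of a bounded transverse cross-section agrees with that of the cone (minus the apex), so $P$ is combinatorially an $(s{-}1)$-simplex exactly when $K$ is a simplicial cone with precisely $s$ extremal rays. This reduces the lemma to a structural statement about $K$.

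I would dispatch the easy preliminaries first. The cone $K \subset C = \BR^n_{\geq 0}$ is pointed since $C$ contains no line, and it has full dimension $s$ inside $W$ because the hypothesis $W \cap C^o \neq \emptyset$ furnishes an interior point of $K$ in $W$. The linear functional $y \mapsto \sum y_i$ is strictly positive on $K \setminus \{0\}$, so $H$ meets every ray of $K$ transversally in a single point, and hence $P = K \cap H$ is a compact $(s{-}1)$-polytope whose face lattice matches that of $K$ with the apex removed.

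The substantive step is to count extremal rays of $K$: each has the form $W \cap \{y_i = 0 : i \in I\}$ for some $I \subset \{1,\dots,n\}$, and I need to show exactly $s$ choices of $I$ produce a bona fide one-dimensional ray inside $C$. My intended approach is to pass to the Gale dual picture: writing $W = \ker L$ for a surjection $L : \BR^n \twoheadrightarrow \BR^{n-s}$ turns extremal rays of $K$ into the minimal positive dependences among the columns of $L$, and the hypothesis $W \cap C^o \neq \emptyset$ becomes the statement that $0$ lies in the relative interior of the convex hull of those columns. \textbf{Main obstacle.} I expect the real work to be in extracting exactly the count $s$ from this single interior-point hypothesis: the condition alone does not visibly force simpliciality of $K$, so a successful argument will either have to invoke extra genericity of the column configuration (plausibly supplied in the application to Theorem~\ref{thm.3} by the convenience of $F$) or a combinatorial refinement I have not yet isolated.
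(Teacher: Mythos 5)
Your reduction is the right one and is in fact the same as the paper's: both arguments cut $K := W \cap C$ with the affine hyperplane $\{\sum y_i = 1\}$ and reduce the claim to the assertion that $K$ is a simplicial cone. The paper's proof is essentially the one-line statement that $C \cap W$ is a simplicial cone when $W$ is a hyperplane (the $s = n-1$ base case), followed by ``downward induction on $s$.''

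The obstacle you flag in your last paragraph is genuine and in fact fatal: the hypothesis $W \cap C^o \neq \emptyset$ alone does not force $K$ to be simplicial, and the lemma is false as stated. Take $n = 4$ and $W = \{y \in \BR^4 : y_1 + y_2 - y_3 - y_4 = 0\}$, so $s = 3$. Then $(1,1,1,1) \in W \cap C^o$, but $K = W \cap C$ has the four extremal rays through $(1,0,1,0)$, $(1,0,0,1)$, $(0,1,1,0)$, $(0,1,0,1)$, and $\D_3 \cap W = \{y \geq 0 : y_1 + y_2 = y_3 + y_4 = \tfrac{1}{2}\}$ is a square: a $2$-dimensional polytope with four vertices, not a simplex. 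The paper's proof therefore breaks at exactly the step you were suspicious of, already at its base case $s=n-1$. You also speculate that convenience of $F$ might rescue the intended application to Theorem~\ref{thm.3}(b); it does not: with $d=1$ and distinct monomials $\calA=\{2,1,-1,-2\}$ the polynomial $F(x)=ax^2+bx+cx^{-1}+dx^{-2}$ is convenient (its Newton segment $[-2,2]$ contains $0$ in its interior), yet $W=\ker(2y_1+y_2-y_3-2y_4)$ again yields a quadrilateral cross-section. So neither the paper's proof nor your planned route can close without an additional genericity hypothesis on the linear forms cutting $W$ out of $C$, and the lemma as stated needs to be corrected rather than proved.
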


\begin{proof}
We can prove the claim by downward induction on $s$. When $s=n-1$,
$W$ is a hyperplane. 
Write $D_{n-1}=C \cap H$ where $H$ is the hyperplane
given by $\sum_{i=1}^n x_i=0$. Observe that $C$ is a simplicial cone
and the intersection $C \cap W$ is a simplicial cone in $W$.
Since the intersection of a simplicial cone inside $C$
with $H$ is a simplicial cone,
it follows that $D_{n-1} \cap W=(C \cap W) \cap H$ is a simplicial cone.
This proves the claim when $s=n-1$. A downward induction on $s$ concludes
the proof.
\end{proof}

\subsection{The Newton polytope of a Laurent polynomial and its associated
\gterm }
\lbl{sub.newton}

In a later publication we will study the close relationship between
the Newton polytope of a Laurent polynomial $F$ 
and the Newton polytope of the corresponding \sterm\ $\ft_F$.
In what follows, fix a Laurant polynomial $F$ as in \eqref{eq.monomials}
and its associated \gterm\ $\ft_F$ as in \eqref{eq.ftFnk}.
With the help of the commutative diagram of Proposition \ref{prop.Psi}
below, we will compare the
extended critical values of $F$ with the set $S_{\ft_F}$. Keep in mind
that \cite{DvK} use the Newton polytope of $F$, whereas our ansatz
uses the Newton polytope of $\ft_F$.

Consider the polynomial map

\begin{equation}
\lbl{eq.phi}
\phi: \bar\BQ[w^{\pm 1}_{\a} | \a \in \calA] \longto 
\bar\BQ[x_1^{\pm 1},\dots, x_d^{\pm 1}]
\end{equation}
given by $\phi(w_{\a})=x^{\a}$. Its kernel $\ker(\phi)$ is a 
{\em monomial ideal}. Adjoin the monomial $w_{\a_0}$ to
$\calA$ (if it not already there), where $\a_0=0 \in \BZ^d$, and consider
the homogenous ideal $\ker^h(\phi)$, where the degree of $w_{\a}$
is $\a$. 

\begin{lemma}
\lbl{lem.vara-alt}
The Variational Equations \eqref{eq.vara} for the \gterm\ $\ft_F$
are equivalent to the following system of equations:
\begin{equation}
\lbl{eq.vara-alt}
\begin{cases}
\prod_{\a \in \calA} x_{\a}^{-p_{\a}} c_{\a}^{p_{\a}}& =1  \,\, 
\text{for} \,\, (w_{\a}^{p_{\a}}) \in \ker^h(\phi)
\\
\sum_{\a \in \calA} x_{\a} \a &=0  \\
\sum_{\a \in \calA} x_{\a} & =1.
\end{cases}
\end{equation}
in the variables $(x_{\a})_{\a \in \calA}$.
\end{lemma}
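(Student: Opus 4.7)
The plan is to prove Lemma \ref{lem.vara-alt} by recognizing, via Theorem \ref{thm.critV}(a), the Variational Equations for $\ft_F$ as the critical-point equations of the potential function $V_{\ft_F}$, and then computing these critical points directly on the Newton polytope $P_{\ft_F} = \Delta_{r-1} \cap W$ from Equations \eqref{eq.PftF}--\eqref{eq.VF}. The three lines of the target system \eqref{eq.vara-alt} will correspond, respectively, to the gradient condition for $V_{\ft_F}$ after eliminating Lagrange multipliers for the affine constraints carving $P_{\ft_F}$ out of $\BR^r$, to the linear subspace $W$, and to the affine hyperplane containing the simplex $\Delta_{r-1}$.

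First, I apply Corollary \ref{cor.app} to $\ft_{F,n,k} = (n!/\prod_\alpha k_\alpha!)\prod_\alpha c_\alpha^{k_\alpha}$ from Equation \eqref{eq.ftFnk}. Writing $k = nx$ with $x$ in the interior of $P_{\ft_F}$ and applying the Stirling expansion \eqref{eq.stan} to each factorial, the $\log n$ contributions cancel thanks to the affine condition $\sum_\alpha x_\alpha = 1$ that holds on $P_{\ft_F}$, yielding
\begin{equation*}
V_{\ft_F}(x) = \sum_{\alpha \in \calA} x_\alpha \log c_\alpha - \sum_{\alpha \in \calA} x_\alpha \log x_\alpha.
\end{equation*}

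Next, I locate the critical points of the restriction of $V_{\ft_F}$ to the interior of $P_{\ft_F}$. The $r = |\calA|$ coordinates $x_\alpha$ are subject to the single equation $\sum_\alpha x_\alpha = 1$ and the $d$ equations $\sum_\alpha x_\alpha \alpha = 0$ defining $P_{\ft_F}$, so introducing Lagrange multipliers $\log K$ and $\log y_1, \ldots, \log y_d$ and using $\partial V_{\ft_F}/\partial x_\alpha = \log c_\alpha - \log x_\alpha - 1$ gives
\begin{equation*}
\frac{c_\alpha}{x_\alpha} = K\, y^\alpha, \qquad \alpha \in \calA,
\end{equation*}
where $y^\alpha = \prod_i y_i^{\alpha_i}$ and the constant $-1$ has been absorbed into $\log K$. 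To eliminate $K$ and $y$, I raise this to the $p_\alpha$-th power and take the product over $\alpha$, obtaining $\prod_\alpha (c_\alpha/x_\alpha)^{p_\alpha} = K^{\sum_\alpha p_\alpha}\, y^{\sum_\alpha p_\alpha\,\alpha}$, whose value is $1$ precisely when both $\sum_\alpha p_\alpha = 0$ and $\sum_\alpha p_\alpha\,\alpha = 0$. Conversely, the character $p \mapsto \prod_\alpha (c_\alpha/x_\alpha)^{p_\alpha}$ from $\BZ^r$ to $\BC^*$ is trivial on this sublattice if and only if it factors as $p \mapsto K^{\sum p_\alpha}\, y^{\sum p_\alpha\,\alpha}$ for some $K \in \BC^*$ and $y \in (\BC^*)^d$, recovering the Lagrange form.

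The simultaneous vanishing of $\sum_\alpha p_\alpha$ and $\sum_\alpha p_\alpha\,\alpha$ is exactly the condition that the Laurent monomial $\prod_{\alpha \in \calA} w_\alpha^{p_\alpha}$ belongs to $\ker^h(\phi)$ as defined just before Lemma \ref{lem.vara-alt}: triviality of the $\phi$-image forces $\sum p_\alpha\,\alpha = 0$, and homogenization by total degree with the adjoined variable $w_{\alpha_0}$ of degree $\alpha_0 = 0$ additionally forces $\sum p_\alpha = 0$ on the $\calA$-exponents. Renaming $w_\alpha \leftrightarrow x_\alpha$ turns the eliminated system into the first family in \eqref{eq.vara-alt}, and appending the defining equations $\sum x_\alpha\,\alpha = 0$ and $\sum x_\alpha = 1$ of $P_{\ft_F}$ completes the system. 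The main obstacle is conceptual rather than computational: a naive reading of \eqref{eq.vara} for $\ft_F$ in its factorial form fails the balance condition of Definition \ref{def.hyperg}, since $n - \sum_\alpha k_\alpha \not\equiv 0$, so one must interpret the Variational Equations for $\ft_F$ as those of its restriction to the lower-dimensional Newton polytope $P_{\ft_F}$; it is precisely this restriction that forces the Lagrange multipliers into play and gives \eqref{eq.vara-alt} its toric content.
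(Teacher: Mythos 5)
Your proof is correct and supplies a genuine argument where the paper's own proof consists of a single sentence (``This follows easily from the description of the Newton polytope $P_{\ft_F}$ and the shape of the \gterm\ $\ft$''). The paper's implicit route, visible in Example \ref{ex.F}, is to eliminate $d+1$ of the $r$ coordinates by solving the affine constraints $\sum_\a x_\a = 1$ and $\sum_\a x_\a \a = 0$, write $\ft_F$ as a bona fide balanced term in the surviving variables, and then read off the Variational Equations \eqref{eq.vara}; the toric content of \eqref{eq.vara-alt} is then recovered a posteriori. You instead keep all $r$ coordinates $x_\a$, compute the potential function $V_{\ft_F}(x)=\sum_\a x_\a\log c_\a - \sum_\a x_\a\log x_\a$ directly from Stirling on the constraint set, and locate its constrained critical points with Lagrange multipliers; the multipliers $(K,y)$ are eliminated exactly by the lattice conditions $\sum_\a p_\a=0$ and $\sum_\a p_\a\a=0$, which is the definition of $\ker^h(\phi)$. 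This buys symmetry under permutation of $\calA$ and makes the identification with the homogenized toric ideal transparent rather than a consequence of an ad hoc choice of parametrization. You also correctly flag the only genuine subtlety in the statement, namely that $\ft_F$ as written in \eqref{eq.ftFnk} violates the balance condition of Definition \ref{def.hyperg}, so \eqref{eq.vara} must be interpreted after restricting to (and reparametrizing) $P_{\ft_F}$; your multiplier setup absorbs this cleanly. Two small remarks. First, in the converse direction you silently use that $\BC^*$ is divisible, so a character of $\BZ^r$ trivial on $L=\{p:\sum p_\a=0,\ \sum p_\a\a=0\}$ extends through $\BZ^{d+1}$ to give $(K,y)$; this is standard but worth saying, especially since $\BZ^r/L$ need not inject onto a saturated sublattice. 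Second, you cite Corollary \ref{cor.app}, which is stated for balanced terms; since $\ft_F$ is not balanced before reparametrizing, what you actually use is the underlying Stirling expansion \eqref{eq.stan} applied to $n!$ and each $(nx_\a)!$, with the $n\log n$ contributions cancelling on the hyperplane $\sum_\a x_\a=1$ — the computation is the same but the citation is slightly off.
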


\begin{proof}
This follows easily from the description of the Newton polytope $P_{\ft_F}$
and the shape of the \gterm\ $\ft$.
\end{proof}

Consider the rational function 

\begin{equation}
\lbl{eq.Psi}
\Psi_F: (\BC^*)^d\setminus F^{-1}(0) \longto (\BC^*)^{r}, \qquad
u \mapsto \Psi_F(u)=\left(\frac{c_{\a}u^{\a}}{F(u)}\right)_{\a \in \calA}
\end{equation}
Observe that the image of $\Psi_F$ lies in the complex affine simplex
$$
\{ (x_{\a}) \in (\BC^*)^{r} \, | \, \sum_{\a \in \calA} x_{\a}=1
\}.
$$

\begin{proposition}
\lbl{prop.Psi}
\rm{(a)} If $u=(u_1,\dots,u_d)$ is a critical point of the restriction
of $F$ on $(\BC^*)^d$
with nonvanishing critical value, then $\Psi_F(u)$ satisfies the Variational
Equations \eqref{eq.vara} for the maximal face $P_{\ft_F}$ of the Newton
polytope $P_{\ft_F}$ of $\ft_P$.
\newline
\rm{(b)} Restricting to those critical points, 
we have a commutative diagram:
\begin{equation}
\lbl{eq.cd}
%\divide\dgARROWLENGTH by2
\begin{diagram}
\node{\text{Critical points of $F$ on $(\BC^*)^d$}} 
\arrow{e,t}{\Psi_F}
\arrow{s,r}{F}
\node{X_{\ft_F,P_{\ft_F}}}
\arrow{s,r}{\text{Map \eqref{eq.StD}}} \\
\node{\text{Critical values of $F$}} 
\arrow{t,e}
\node{S_{\ft_F,P_{\ft_F}}}
\end{diagram}
\end{equation}
\rm{(c)} The top horizontal map of the above diagram is 1-1 and onto.
\end{proposition}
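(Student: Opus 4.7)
The plan is to translate the Variational Equations \eqref{eq.vara} into the $(x_\alpha)$-coordinates provided by Lemma \ref{lem.vara-alt} and then verify all three claims by direct substitution of $x_\alpha := c_\alpha u^\alpha/F(u)$, so that every condition reduces to a consequence of three elementary identities: $F(u) = \sum_\alpha c_\alpha u^\alpha$, the Euler-type relation $u_j\partial_{u_j} F(u) = \sum_\alpha \alpha_j c_\alpha u^\alpha$, and $\prod_\alpha (u^\alpha)^{p_\alpha} = u^{\sum_\alpha p_\alpha \alpha}$.

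For part (a), I would fix a critical point $u \in (\BC^*)^d$ of $F$ with $F(u)\neq 0$ and set $x_\alpha = c_\alpha u^\alpha/F(u)$. The affine equation $\sum_\alpha x_\alpha = 1$ is the definition of $F(u)$; dividing the critical point equations by $F(u)$ yields $\sum_\alpha \alpha\, x_\alpha = 0$; and for the multiplicative part, every $(p_\alpha)\in\ker^h(\phi)$ carries $\sum_\alpha p_\alpha = 0$ (homogeneity, after adjoining the variable $w_{\alpha_0}$) and $\sum_\alpha p_\alpha \alpha = 0$ (being in $\ker\phi$), from which
$$\prod_\alpha(x_\alpha/c_\alpha)^{p_\alpha} = F(u)^{-\sum_\alpha p_\alpha}\, u^{\sum_\alpha p_\alpha\alpha} = 1$$
drops out immediately. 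These are precisely the three systems of Lemma \ref{lem.vara-alt}.

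For part (b), Stirling's formula applied to \eqref{eq.ftFnk}, as in Corollary \ref{cor.app}, yields the potential $V_{\ft_F}(x) = \sum_\alpha x_\alpha \log(c_\alpha/x_\alpha)$ on the top face. Evaluating at $x = \Psi_F(u)$ and using $\sum_\alpha x_\alpha = 1$ and $\sum_\alpha \alpha\, x_\alpha = 0$, the right vertical map \eqref{eq.StD} collapses to $e^{-V_{\ft_F}(\Psi_F(u))}=\prod_\alpha (u^\alpha/F(u))^{x_\alpha} = F(u)^{-1}$. The diagram therefore commutes once the bottom map is identified as $F(u) \mapsto 1/F(u)$, in line with the standard correspondence between exponential growth rates and singularities.

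For part (c), to invert $\Psi_F$, given a solution $(x_\alpha)$ of \eqref{eq.vara-alt}, the multiplicative equations say precisely that $w_\alpha \mapsto x_\alpha/c_\alpha$ kills $\ker^h(\phi)$ up to a single global scalar and hence descends through $\phi$ to a well-defined point $u \in (\BC^*)^d$; the scalar is then pinned down by $\sum_\alpha x_\alpha = 1$ via $F(u)=1$, and then $c_\alpha u^\alpha = x_\alpha F(u)$, while $\sum_\alpha \alpha\, x_\alpha = 0$ becomes the critical point condition. Injectivity follows from $(u/u')^{\alpha-\beta}=1$ for all $\alpha,\beta \in \calA$, together with the convenience assumption that the differences $\alpha-\beta$ span $\BZ^d$. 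The hard part will be exactly this setup: one must verify that the homogenized ideal $\ker^h(\phi)$, rather than $\ker\phi$ itself, is the correct algebraic object encoding torus relations among monomials, and that the single scalar ambiguity is canonically fixed by the normalization $\sum_\alpha x_\alpha = 1$; once this bookkeeping is in place, the rest is formal.
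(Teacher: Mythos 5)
Your argument is correct and follows essentially the same route as the paper. The paper only writes out the verification of part (a) in detail — substituting $x_\alpha = c_\alpha u^\alpha / F(u)$ into the three lines of Lemma \ref{lem.vara-alt} and using $\sum_\alpha p_\alpha = 0$, $\sum_\alpha p_\alpha \alpha = 0$ for $\ker^h(\phi)$ — and then declares (b) and (c) to ``follow from similar computations''; you usefully supply those computations, and both are right: for (b), $e^{-V_{\ft_F}(\Psi_F(u))} = \prod_\alpha (u^\alpha/F(u))^{x_\alpha} = u^{\sum x_\alpha \alpha} F(u)^{-\sum x_\alpha} = 1/F(u)$, identifying the bottom map as inversion, and for (c) the toric decomposition $x_\alpha/c_\alpha = \lambda u^\alpha$ combined with the affine and linear constraints inverts $\Psi_F$. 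The one small imprecision is your phrase in (c) that the scalar is pinned down ``via $F(u)=1$'': in fact the normalization $\sum_\alpha x_\alpha = 1$ forces $\lambda = 1/F(u)$ (not $F(u)=1$), after which $x_\alpha = \Psi_F(u)_\alpha$, while $\sum_\alpha \alpha\, x_\alpha = 0$ separately recovers the critical-point equations for $u$.
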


\begin{proof}
For (a) we will use the alternative system of Variational Equations
given in \eqref{eq.vara-alt}. Suppose that $u=(u_1,\dots,u_d)$ is a critical
point of $F$ with nonzero critical value, and let $(w_{\a})_{\a \in \calA}$
denote the tuple $(c_{\a} u^{\a}/F(u))_{\a \in \calA}$. We need to show that
$(w_{\a})_{\a \in \calA}$ is a solution to Equations \eqref{eq.vara-alt}.
Consider a point $(p_{\a})_{\a \in \calA} \in P_{\ft_F}$.
Then, we can verify the 
first line of Equation \eqref{eq.vara-alt} as follows:

\begin{eqnarray*}
\prod_{\a \in \calA} w_{\a}^{-p_{\a}} c_{\a}^{p_{\a}} &=&
\prod_{\a \in \calA} \left(\frac{c_{\a} u^{\a}}{F(u)}\right)^{-p_{\a}} c_{\a}^{p_{\a}}
\\ 
&=&
u^{-\sum_{\a \in \calA} p_{\a} \a} F(u)^{-\sum_{\a \in \calA} p_{\a}} \\
&=& 1
\end{eqnarray*}
since $(w_{\a}^{p_{\a}})_{\a \in \calA}$ in in the kernel of $\phi$.
To verify the second line of Equation \eqref{eq.vara-alt}, recall that
$$
z \pt_z (z^k)=k z^k.
$$ 
Since $u$ is a critical point of $F$, it follows that for every $i=1,\dots,d$
we have:
$$
u_i \pt_{u_i} F(u)=\sum_{\a \in \calA} v_i(\a) c_{\a} u^{\a}=0
$$
where $v_i(\a)$ is the $i$th coordinate of $\a$. It follows that
$\sum_{\a \in \calA} w_{\a} \a=0$, which verifies the second line of 
\eqref{eq.vara-alt}. To verify the third line, we compute:

$$
\sum_{\a \in \calA} \frac{c_{\a} u^{\a}}{F(u)}=\frac{1}{F(u)} 
 \sum_{\a \in \calA} c_{\a} u^{\a} =\frac{F(u)}{F(u)}=1.
$$
(b) and (c) follow from similar computations.
\end{proof}

Let us end this section with an example that illustrates Lemma 
\ref{lem.vara-alt} and Proposition \ref{prop.Psi}.

\begin{example}
\lbl{ex.F}
Consider the Laurent polynomial
$$
F(x_1,x_2)= a x_1 + b x_1^{-1} + c x_2 + d x_2^{-1} + f x_1 x_2 + g
$$
Its Newton polytope is given by 

$$
\psdraw{newtonpolygon}{1.5in}
$$

With the notation from the previous section, we have $d=2$, $r=6$. Moreover,
for every $n \in \BN$, we have

$$
F^n=
\sum_{k_1+\dots+k_6=n} \frac{n!}{k_1!\dots k_6!}
a^{k_1} b^{k_2} c^{k_3} d^{k_4} f^{k_5} g^{k_6} x_1^{k_1-k_2+k_5}
x_2^{k_3-k_4+k_5}.
$$ 
It follows that

$$
\Tr(F^n)=
\sum_{(k_1,\dots,k_6) \in n P_{\ft_F} \cap \BZ^6}
 \frac{n!}{k_1!\dots k_6!}
a^{k_1} b^{k_2} c^{k_3} d^{k_4} f^{k_5} g^{k_6} 
$$
where the Newton polytope $P_{\ft_F} \subset \BR^6$ is given by

\begin{equation}
\lbl{eq.np1}
P_{\ft_F}=\{ (x_1,\dots,x_6) \in \BR^6 \, | \, 
x_1-x_2+x_5=0, \quad x_3-x_4+x_5=0, \quad x_1+\dots+x_6=1, \quad x_i \geq 0
\}
\end{equation}
We can use the variables $(x_1,x_3,x_5)$ to parametrize $P_{\ft_F}$ as
follows:

\begin{equation}
\lbl{eq.np2}
P_{\ft_F}=\{ (x_1,x_3,x_5) \in \BR^3 \, | \, 1 \geq 2x_1+2x_3 + 3 x_5, 
\quad x_i \geq 0
\}
\end{equation}
This confirms that $P_{\ft_F}$ is a combinatorial 2-dimensional simplex.

If $(k_1,\dots,k_6) \in P_{\ft_F}$, then $(k_2,k_4,k_6)=(k_1+k_5,k_3+k_5,
n-2k_1-2k_3-3k_5)$, and

$$
\Tr(F^n)=\sum_{2k_1+2k_3+2k_5=n} 
 \frac{n!}{k_1!(k_1+k_5)!k_3!(k_3+k_5)! k_5! (n-2k_1-2k_3-3k_5)!}
\left(\frac{ab}{g^2}\right)^{k_1} 
\left(\frac{cd}{g^2}\right)^{k_3} 
\left(\frac{bdf}{g^3}\right)^{k_5} g^n.
$$
The Variational Equations \eqref{eq.vara} are:

\begin{eqnarray}
\notag
\frac{1}{w_1 (w_1+w_5) (1-2w_1-2w_3-3w_5)^{-2}} \frac{ab}{g^2} &=& 1 \\
\lbl{eq.vex1}
\frac{1}{w_3 (w_3+w_5) (1-2w_1-2w_3-3w_5)^{-2}} \frac{cd}{g^2} &=& 1  \\
\notag
\frac{1}{(w_1+w_5) (w_3+w_5) w_5 (1-2w_1-2w_3-2w_5)^{-3}} \frac{bdf}{g^3} &=& 1
\end{eqnarray}
in the variables $(w_1,w_3,w_5)$. Reintroducing the variables 
$w_2$, $w_4$ and $w_6$ defined by
$$
w_2=w_1+w_5, \qquad w_4=w_3+w_5, \qquad w_6=1-2w_1-2w_3-2w_5
$$
the Variational Equations become

\begin{eqnarray}
\notag
w_2 &=& w_1+w_5 \\
\lbl{eq.vex2}
w_4 &= & w_3+w_5 \\
\notag
w_6 &= & 1-2w_1-2w_3-2w_5 \\
\notag
\frac{1}{w_1 w_2 w_6^{-2}} \frac{ab}{g^2} &=& 1 \\
\lbl{eq.vex3}
\frac{1}{w_3 w_4 w_6^{-2}} \frac{cd}{g^2} &=& 1  \\
\notag
\frac{1}{w_2 w_4 w_5 w_6^{-3}} \frac{bdf}{g^3} &=& 1
\end{eqnarray}
On the other hand, $w_{\a_0}=w_6$ and 
$\ker(\phi)$ is generated by the relations 
$w_6=1$, $w_1 w_2=1$, $w_3 w_4 =1$ and $w_2 w_4 w_5=1$ which lead to
the homogenous relations $w_1 w_2 w_6^{-2}=1$, $w_3 w_4 w_6^{-2}=1$
and $w_2 w_4 w_5 w_6^{-3}$ for $\ker^h(\phi)$ 
that appear in the above Variational Equations.
This illustrates Lemma \ref{lem.vara-alt}.

Suppose now that $u=(u_1,u_2)$ is a critical point of $F$. Then, $u$
satisfies the equations:

\begin{eqnarray}
\lbl{eq.vex4}
\frac{\pt F}{\pt u_1} &=& a-\frac{b}{u_1^2}+f u_2=0 \\
\lbl{eq.vex5}
\frac{\pt F}{\pt u_2} &=& c-\frac{d}{u_2^2}+f u_1=0.
\end{eqnarray}
The map $\Psi_F$ is defined by $\Psi_F(u_1,u_2)=(w_1,\dots,w_6)$ where

$$
w_1=\frac{a u_1}{F(u)}, \qquad
w_2=\frac{b }{u_1 F(u)}, \qquad
w_3=\frac{c u_2}{ F(u)}, \qquad
w_4=\frac{d }{u_2 F(u)}, \qquad
w_5=\frac{f u_1 u_2}{ F(u)}, \qquad
w_5=\frac{g }{ F(u)}.
$$
If $u=(u_1,u_2)$ satisfies Equations \eqref{eq.vex4}, \eqref{eq.vex5},
it is easy to see that $w=\Psi_F(u)$ satisfies the Variational Equations
\eqref{eq.vex2} and \eqref{eq.vex3}. For example, we have

$$
w_3+w_5 -w_4 = \frac{c u_2}{F(u)} + \frac{f u_1 u_2}{F(u)} -
\frac{d}{ u_2 F(u)}= \frac{u_2}{F(u)}\left( c + f u_1 -\frac{d}{u_2^2}
\right)=0
$$
and
 
$$
\frac{1}{w_3 w_4 w_6^{-2}} \frac{cd}{g^2} = 
\frac{1}{ \frac{c u_2}{F(u)} \frac{d}{u_2 F(u)} 
\left(\frac{g }{F(u)}\right)^{-2} } \frac{cd}{g^2}=1.
$$
This illustrates Proposition \ref{prop.Psi}.
\end{example}

\ifx\undefined\bysame
        \newcommand{\bysame}{\leavevmode\hbox
to3em{\hrulefill}\,}
\fi


\begin{thebibliography}{[EMSS]}

\bibitem[An1]{An1} Y. Andre,
        {\em $G$-functions and geometry}, 
        Aspects of Mathematics, E13. Braunschweig, 1989. 

\bibitem[An2]{An2} Y. Andr\'e, 
        {\em S\'eries Gevrey de type arithm\'etique. I. Th\'eor\`emes 
        de puret\'e et de dualit\'e}, 
        Ann. of Math. (2)  {\bf 151}  (2000) 705--740.

\bibitem[AD]{AD} J. Acz\'el and J. Dhombres, 
        {\em Functional equations in several variables},
        Encyclopedia of Mathematics and its Applications {\bf 31} 
        Cambridge University Press, Cambridge, 1989.

\bibitem[BT]{BT} G. Birkhoff and W. Trjitzinsky,
        {\em Analytic theory of singular difference equations},
        Acta Math. {\bf 60} (1932) 1--89.

\bibitem[CC]{CC} D.V. Chudnovsky and G.V. Chudnovsky, 
        {\em Applications of Pad\'e approximations to the Grothendieck 
        conjecture on linear differential equations},
        in  Number theory  Lecture Notes in Math. {\bf 1135} 
        Springer-Verlag (1985) 52--100.

\bibitem[CG1]{CG1} O. Costin and S. Garoufalidis,
        {\em Resurgence of the Kontsevich-Zagier power series},
        preprint 2006 {\tt math.GT/0609619}.

\bibitem[CG2]{CG2} \bysame and \bysame,
        {\em Resurgence of the Euler-MacLaurin summation formula},
        Annales de l' Institut Fourier, {\em in press}.

\bibitem[Da]{Da} Z. Dar\'oczy, 
        {\em Generalized information functions},  
        Information and Control  {\bf 16}  (1970) 36--51.

\bibitem[DvK]{DvK} J.J. Duistermaat and W. van der Kallen, 
        {\em Constant terms in powers of a Laurent polynomial},
        Indag. Math. {\bf 9} (1998) 221--231.

\bibitem[DGS]{DGS} B. Dwork, G. Gerotto and F.J. Sullivan, 
        {\em An introduction to $G$-functions},
        Annals of Mathematics Studies {\bf 133} Princeton University Press,
        1994.

\bibitem[FS]{FS} P. Flajolet and R. Sedgewick,
        {\em Analytic combinatorics: functional equations, rational and 
        algebraic functions}, electronic book.

\bibitem[Fu]{Fu} W. Fulton,
        {\em Introduction to toric varieties}, 
        Annals of Mathematics Studies {\bf 131} 
        Princeton University Press  1993.
        
\bibitem[Ga1]{Ga1} S. Garoufalidis,
        {\em $q$-terms, singularities and the extended Bloch group},
        preprint 2007.

\bibitem[Ga2]{Ga2} \bysame,
        {\em  An extended version of additive K-theory},
        Journal of K-theory, {\em in press}.

\bibitem[Ga3]{Ga3} \bysame,
        {\em Resurgence of 1-dimensional hypergeometric multisums}, preprint 
        2007.

\bibitem[Ga4]{Ga4} \bysame,
        {\em $G$-functions and multisum versus holonomic sequences},
        preprint 2007 {\tt arXiv:0708.4354}.

\bibitem[GV]{GV} \bysame and R. van der Veen,
        {\em Asymptotics of $6j$-symbols},
        preprint 2008.

\bibitem[Ju]{Ju} R. Jungen,
        {\em Sur les s\'eries de Taylor n'ayant que des singularit\'es 
        alg\'ebrico-logarithmiques sur leur cercle de convergence},
        Comment. Math. Helv. {\bf 3} (1931) 266--306. 

\bibitem[Ka]{Ka} N.M. Katz, 
        {\em Nilpotent connections and the monodromy theorem: Applications 
        of a result of Turrittin},  
        IHES Publ. Math. No. {\bf 39} (1970) 175--232.

\bibitem[Kn]{Kn} D. Knuth,
        {\em The art of computer programming}, 
        vol 3, Sorting and searching. Addison-Wesley Publishing Co., 1973.

\bibitem[KM]{KM} M. Kontsevich and Yu. Manin, 
        {\em Gromov-Witten classes, quantum cohomology, and 
        enumerative geometry}, 
        in  Mirror symmetry, II  AMS/IP Stud. Adv. Math., {\bf 1} 
        (1999) 607--653.

\bibitem[KZ]{KZ} \bysame and D. Zagier,
        {\em Periods}, in 
        Mathematics unlimited---2001 and beyond, (2001) 771--808.

\bibitem[Kou]{Kou} A.G. Kouchnirenko, 
        {\em Poly\`edres de Newton et nombres de Milnor},
        Invent. Math. {\bf 32} (1976) 1--31. 

\bibitem[O]{O} F. Olver, 
        {\em Asymptotics and special functions}, Reprint. 
        AKP Classics. A K Peters, Ltd., Wellesley, MA, 1997.

\bibitem[Ni]{Ni} N. Nilsson, 
        {\em Some growth and ramification properties of certain integrals 
        on algebraic manifolds},
        Ark. Mat. {\bf 5} (1965) 463--476.

\bibitem[No]{No} N.E. Norlund,  
        {\em Hypergeometric functions}  Acta Math.  {\bf 94} (1955) 289--349. 

\bibitem[PR1]{PR1} P. Paule and A. Riese,
        {\em A Mathematica $q$-Analogue of Zeilberger's Algorithm Based on an
        Algebraically Motivated Approach to $q$-Hypergeometric Telescoping},
        in Special Functions, $q$-Series and Related Topics,
        Fields Inst. Commun., {\bf 14} (1997) 179--210.

\bibitem[PR2]{PR2} \bysame, Mathematica software:
        \verb+http://www.risc.uni-linz.ac.at/research/combinat/risc/software/qZeil/+

\bibitem[PWZ]{PWZ} M. Petkov\v sek, H.S. Wilf and D.Zeilberger,
        {\em $A=B$},
        A.K. Peters, Ltd., Wellesley, MA 1996.

\bibitem[R-V]{R-V} F. Rodriguez-Villegas,
        {\em Integral ratios of factorials and algebraic hypergeometric 
        functions}, talk in Oberwolfach {\tt math.NT/0701362}.

\bibitem[Si]{Si} C.L. Siegel,
        {\em \"Uber einige Anwendungen diophantischer Approximationen},
        Abh. Preuss. Akad. Wiss. {\bf 1} (1929) 1--70. Reprinted in 
        Gesammelte Abhandlungen, vol. 1, no 16 (1966) 209--266. 
 
\bibitem[St]{St} R. Stanley,
        {\em Enumerative combinatorics}, Vol. 1, 2. second edition,
        Cambridge University Press, Cambridge, 1997.

\bibitem[Ti]{Ti} E.C. Titchmarsh,
        {\em The theory of functions}, second edition, Oxford Univ. Press
        1939.

\bibitem[WZ1]{WZ1} H. Wilf and D. Zeilberger,
        {\em An algorithmic proof theory for hypergeometric (ordinary and
        $q$) multisum/integral identities},
        Inventiones Math. {\bf 108} (1992)  575--633.

\bibitem[WZ2]{WZ2} J. Wimp and D. Zeilberger, 
        {\em Resurrecting the asymptotics of linear recurrences},  
        J. Math. Anal. Appl. {\bf 111}  (1985) 162--176.

\bibitem[Z]{Z} D. Zeilberger,
        {\em A holonomic systems approach to special functions identities},
        J. Comput. Appl. Math. {\bf 32} (1990) 321--368.

\end{thebibliography}
\end{document}